\documentclass[12pt]{amsart}
\usepackage{amssymb,amsmath}
\usepackage{amssymb,amsfonts}
\usepackage{amsmath,latexsym}

\setlength\topmargin{-0.2cm}

\setlength\textheight{23.5cm} \setlength\textwidth{16.5cm}
\setlength\oddsidemargin{-0.2cm} \setlength\evensidemargin{-0.4cm}

\newtheorem{proposition}{Proposition}[section]
\newtheorem{lemma}{Lemma}[section]
\newtheorem{definition}{Definition}[section]


\def\refer#1{~\ref{#1}}
\def\refeq#1{~(\ref{#1})}
\def\ccite#1{~\cite{#1}}

\def\longformule#1#2{
\displaylines{
\qquad{#1}
\hfill\cr
\hfill {#2}
\qquad\cr}}



\let\e=\varepsilon

\let\D=\Delta
\let\Lam=\Lambda


\def\virgp{\raise 2pt\hbox{,}}
\def\cdotpv{\raise 2pt\hbox{;}}

\def\eqdefa{\buildrel\hbox{\footnotesize def}\over =}
\def\Id{\mathop{\rm Id}\nolimits}

\def\C{\mathop{\mathbb C\kern 0pt}\nolimits}
\def\DD{\mathop{\mathbb D\kern 0pt}\nolimits}
\def\EE{\mathop{\mathbb E\kern 0pt}\nolimits}
\def\K{\mathop{\mathbb K\kern 0pt}\nolimits}
\def\N{\mathop{\mathbb  N\kern 0pt}\nolimits}
\def\Q{\mathop{\mathbb  Q\kern 0pt}\nolimits}
\def\R{\mathop{\mathbb R\kern 0pt}\nolimits}
\def\SS{\mathop{\mathbb  S\kern 0pt}\nolimits}
\def\Z{\mathop{\mathbb  Z\kern 0pt}\nolimits}
\def\ZZ{\mathop{\mathbb  Z\kern 0pt}\nolimits}
\def\H{\mathop{\mathbb  H\kern 0pt}\nolimits}
\def\PP{\mathop{\mathbb P\kern 0pt}\nolimits}
\def\TT{\mathop{\mathbb T\kern 0pt}\nolimits}

\def\dive{\mathop{\rm div}\nolimits}



\def\tN#1{|\kern -0.05em\|{#1}|\kern -0.05em\|}

\def\hat{\widehat}



\numberwithin{equation}{section}
\newtheorem{thm}{Theorem}

\newtheorem{remark}[lemma]{Remark}

\newtheorem{prop}[lemma]{Proposition}


  \newcommand{\cqfd}{\mbox{ } \hfill$\Box$}



  \def\e{\varepsilon} \def\cdotv{\raise 2pt\hbox{,}}
   \def\eqdefa{\buildrel\hbox{\footnotesize
      def}\over =}

  \let\ds=\displaystyle

  \newcommand{\LB}[4]{\mathcal{L}^{#1}_t(\dot{B}^{{#2},{#4}}_{#3})}
  \newcommand{\Be}[3]{\dot{B}^{{#1},{#3}}_{#2}}
  \newcommand{\Lb}[2]{\mathcal{L}^{#1} {B}_{#2}}
  \newcommand{\Lh}[2]{\mathcal{L}^{#1} {B}^{#2}}


   \def\pasdegrille{\let\grille =
    \pasgrille}  \def\aat#1#2#3{
    \divide \dimen1 by 48 \dimen3=\dimen1 \multiply \dimen1 by #1
    \advance \dimen1 by -\dimen3 \divide \dimen1 by 101 \multiply
    \dimen1 by 100 \divide \dimen2 by \count11 \multiply \dimen2 by #2
    \setbox0=\hbox{#3}\ht0=0pt\dp0=0pt \rlap{\kern\dimen1 \vbox
      to0pt{\kern-\dimen2\box0\vss}}\dimen1= \wd1 \dimen2=\ht1}
  \def\pasgrille{ \count12= \dimen1 \divide \count12 by 50 \divide
    \dimen2 by \count12 \count11 =\dimen2 \ \divide \dimen1 by 48
    \setlength{\unitlength}{\dimen1} \smash{\rlap{\ }} \dimen1= \wd1
    \dimen2=\ht1 } \def\grille{ \count12= \dimen1 \divide \count12 by
    50 \divide \dimen2 by \count12 \count11 =\dimen2 \ \divide \dimen1
    by 48 \setlength{\unitlength}{\dimen1}
    \smash{\rlap{\graphpaper[1](0,0)(50, \count11)}} \dimen1= \wd1
    \dimen2=\ht1 }



  \edef\@tempa#1#2{\def#1{\mathaccent\string"\noexpand\accentclass@#2
    }} \@tempa\ring{017}
  


  \begin{document}

  \title{Self-improving bounds for the Navier-Stokes equations}

  \author{Jean-Yves Chemin}
  \address{ Laboratoire Jacques-Louis Lions, UMR CNRS 7598\\
    Universit\'e Pierre et Marie Curie\\
    4 place Jussieu 75005 Paris\\
    France} \email{chemin@ann.jussieu.fr}

  \author{Fabrice Planchon}
  \address{Laboratoire J. A. Dieudonn\'e, UMR CNRS 7351\\
    Universit\'e de Nice Sophia-Antipolis\\
    Parc Valrose\\
    06108 Nice Cedex 02\\
    France and Institut universitaire de France} \email{fabrice.planchon@unice.fr} \thanks{The second
    author was partially supported by A.N.R. grant SWAP} \date{}
  \begin{abstract}
    We consider regular solutions to the Navier-Stokes equation and
    provide an extension to the Escauriaza-Seregin-Sverak blow-up
    criterion in the negative regularity Besov scale, with regularity
    arbitrarly close to $-1$. Our results rely on turning a priori
    bounds for the solution in negative Besov spaces into bounds in
    the positive regularity scale.

  \end{abstract}
  \maketitle
  \section{Introduction}
  We consider the incompressible Navier-Stokes equations in
  $\mathbb{R}^3$,
  \begin{equation*}
\tag{NS} \quad \left \{ \begin{array}{rcl} \partial_t u & = & \Delta u - \nabla \cdot (u\otimes u)-\nabla \pi,\\ 
\dive u & = & 0,\\ u|_{t=0} & = & u_{0}
      \end{array} \right.
  \end{equation*}
  for $(x,t)\in \mathbb{R}^3 \times \R^+$, where
  $u=(u_{i}(x,t))_{i=1}^3 \in \mathbb{R}^3$ is the velocity vector
  field, $\pi(x,t)\in \R$ is the associated pressure function and
  \begin{equation*}
 {\nabla \cdot (u \otimes u)} :=
  \Bigl(\sum_{j=1}^d \partial_{x_{j}}(u_{i}u_{j})\Bigr)_{i=1}^d.
  \end{equation*}
In the pioneering work\ccite{leray}, J. Leray proved the existence of
global turbulent (weak in the modern terminology) solutions of~(NS)
for initial data with finite kinetic energy, i.e. initial data
in~$L^2$. These solutions need not be unique or preserve regularity of the initial data. In this same
  work, J. Leray proved  that for regular enough initial data (namely~$H^1$ initial data), a local (in time) unique solution
  exists. He also proved that as long as this solution is regular
  enough, it is unique among all the possible turbulent solutions, and
  moreover, if such a turbulent solution satisfies
  \begin{equation}
    \label{eq:serrin}
    u\in L^p([0,T[;L^q(\mathbb{R}^3)) \text{ with } \frac 2 p +\frac 3 q=1,\,\,q>3,
  \end{equation}
  then the solution remains regular on~$[0,T]$ and can be extended
  beyond time~$T$. This is now known as Serrin's criterion.
  
  On the other hand, there is a long line of works on constructing
 local in time solutions, from H. Fujita and~T. Kato (see\ccite{KF})
 to H. Koch and D. Tataru (see\ccite{KT}). For these results, the main
 feature is that the initial data belongs to spaces which are
 invariant under the scaling of the equations. Between\ccite{KF}
 and\ccite{KT}, T. Kato (see\ccite{kato}) proved wellposedness of (NS) for initial data~$u_0$  in~$L^3$.  In this
  framework of local in time (strong, e.g. unique) solutions, Serrin's
  criterion may be understood as a non blow-up criterion at time $T$:
  e.g. if $u$ is a strong solution with $u_{0}\in L^3({\mathbb R}^3)$,
  that is $u\in C([0,T[;L^3({\mathbb R^3}))$, and if\refeq{eq:serrin}
  is satisfied, then one may (continuously and uniquely) extend the
  solution $u$ past time $T$.

  In the recent important work \cite{ess}, L. Escauriaza, G. Seregin and V. \v
  Sver\'ak obtained the endpoint version of Serrin's criterion, using
  blow-up techniques to construct a special solution vanishing at
  blow-up time and then backward uniqueness to rule out its
  existence. Earlier work of Giga and Von Wahl proved this endpoint
  under a continuity in time assumption in $L^3$, and such a
  continuity result was recently improved to match the local in time
  theory by Cheskidov-Shvydkoy \cite{Ches}.

  Our first theorem (Theorem \ref{th1bis} below) may be seen as an
  extension of the endpoint criterion by Escauriaza-Seregin-\v
  Sver\'ak, in the negative regularity scale. Before providing an
  exact statement, we need to introduce a few notations and
  definitions.

  Since we are interested in smooth (or at least strong in the Kato
  sense) solutions, (NS) is equivalent for our purpose with its
  integral formulation, where the pressure has been disposed of with
  the projection operator $\mathbb{P}$ over divergence free vector
  fields:
  \begin{equation}
    \label{eq:nsi}
    u=S(t)u_{0}-\int_{0}^t \mathbb{P} S(t-s)\nabla\cdot (u\otimes u)(s)\,ds=u_{L}+B(u,u)  
  \end{equation}
  where $S(t)=\exp(t\mathbb{P}\Delta)=\mathbb{P}\exp(t\Delta)$ is the Stokes
  flow (which is nothing but the heat flow in~$\mathbb{R}^3$ on
  divergence free vector fields) and $B(u,u)$ is the Duhamel term
  which reads, component wise
  \begin{equation}
    \label{eq:defB}
    B(f,g)=-\int_{0}^t R_{j} R_{k} R_{l} |\nabla| S(t-s) (fg)(s)\,ds,
  \end{equation}
  where the $R_{(\cdot)}$ are the usual Riesz transforms (recall
  $\mathbb{P}$ is a Fourier multiplier with matrix valued symbol
  $\Id-|\xi|^{-2} \xi \otimes \xi$). We will denote the Lebesgue norm
  by
$$
\|f\|_{p}=\|f\|_{L^p}=\left(\int_{\R^3} |f(x)|^p\,dx\right)^{\frac 1
  p}.
$$
Let us recall a definition of Besov spaces using the heat flow
$S(\sigma)$.
\begin{definition}
  \label{d1}
  Let $Q(\sigma)=\sigma\partial_\sigma S(\sigma)$. We define $\dot
  B^{s,q}_{p}$ as the set of tempered distributions~$f$ such that
  \begin{itemize}
  \item the integral $ \int^{N}_{1/N} Q(\sigma) f \, d\sigma/\sigma $
    converges to $f$ when $N\rightarrow +\infty$ as a tempered distribution if~$s < {\frac{d}{p}}$
    and after taking the quotient with polynomials if not, and
  \item the function $\sigma^{-s/2}\| Q(\sigma) f \|_{p}$ is in
    $L^q(d\sigma/\sigma)$; its norm defines the Besov norm of $f$:
    \begin{equation}
      \label{eq:defBesovHeat}
      \|f\|^q_{\dot B^{s,q}_p}=\int_0^{+\infty} \sigma^{-sq/2}\|Q(\sigma) f\|^q_p \,\frac{d\sigma}\sigma.
    \end{equation}
  \end{itemize}
\end{definition}
We recall that the usual (homogeneous) Sobolev spaces $\dot H^s$,
defined through the Fourier transform by $|\xi|^s \hat f(\xi) \in
L^2$, may be identified with $\dot B^{s,2}_2$, while the critical
Sobolev embedding holds as follows: $\dot B^{s,q}_p\hookrightarrow
\dot B^{\rho,\lambda}_r$ provided $s-d/p=\rho-d/r$, $s\geq \rho$ and
$q\leq \lambda$, as well as~$\dot B^{s,q}_p \hookrightarrow L^{r}_x$
provided $s-d/p=-d/r$, $s\geq 0$ and $q\leq r$.

We are now in a position to state our first result:
\begin{thm}
  \label{th1bis}
  Let $u$ be a local in time solution to (NS) such that
  $u_{0}\in \dot H^{1/2}$. Assume that there exist $p\in ]3,+\infty[$
  and $q<2p'$ such that
  \begin{equation}
    \label{eq:apriori2}
    \sup_{t\in [0,T[} \| u(\cdot,t)\|_{ \Be { 3/p-1} p {q}} <+\infty,
  \end{equation}
  then the solution may be uniquely extended past time $T$.
\end{thm}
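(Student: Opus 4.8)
The plan is to show that the critical a priori control in the negative-regularity space $\Be{3/p-1}{p}{q}$ forces, through the equation, a bound in a \emph{positive}-regularity space with finite time integrability, from which extension follows by a Serrin-type argument. Since $u_0\in\dot H^{1/2}$, the Fujita--Kato/Kato theory produces a unique strong solution, smooth for $t>0$, and it suffices to prove that $u$ is bounded, on $[0,T[$, in some $\mathcal L^{a}_T(\Be{\sigma}{p}{\lambda})$ with $a<\infty$ and $\sigma=\frac3p-1+\frac2a>0$. Indeed such a space embeds (the mild side conditions $\sigma\geq 0$ and $\lambda\leq r$ being met) into $L^a_T L^r_x$ with $\frac2a+\frac3r=1$ and $r>3$, which is exactly the Serrin condition recalled in the introduction; the solution then extends past $T$, uniqueness being inherited from the $\dot H^{1/2}$ well-posedness theory.

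Write $u=u_L+B(u,u)$ with $u_L=S(t)u_0$. The linear part is harmless: $u_0\in\dot H^{1/2}=\Be{1/2}{2}{2}$ gives $u_L\in\mathcal L^a_T(\Be{1/2+2/a}{2}{2})$ for every $a$ by the decay $\|\dot\Delta_j S(t)u_0\|_2\lesssim e^{-ct2^{2j}}\|\dot\Delta_j u_0\|_2$, and the right-hand side embeds into the target space. Everything thus reduces to $B(u,u)$. At frequency $2^j$ one has $\|\dot\Delta_j B(u,u)(t)\|_p\lesssim \int_0^t 2^j e^{-c(t-s)2^{2j}}\|\dot\Delta_j(u\otimes u)(s)\|_p\,ds$, so that $B$ gains up to two spatial derivatives while converting time integrability (maximal parabolic regularity). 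Decomposing $u\otimes u$ à la Bony into the two paraproducts and the remainder $R(u,u)$, the paraproducts are controlled by the a priori bound alone: placing the low-frequency factor in $\Be{-1}{\infty}{\infty}$ through the critical embedding $\Be{3/p-1}{p}{q}\hookrightarrow\Be{-1}{\infty}{\infty}$ and the high-frequency factor in the a priori space, they land in the scaling-correct $\Be{3/p-2}{p}{q}$ with norm $\lesssim M\,\|u\|_{\Be{3/p-1}{p}{q}}$, uniformly in $t$.

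The \emph{main obstacle} is the remainder $R(u,u)$, the high--high frequency interaction. From the a priori bound alone it cannot be estimated when $p>3$: the two factors then have regularity $\frac3p-1<0$ with sum $2(\frac3p-1)<0$, and the corresponding dyadic series diverges (equivalently, a naive $\dot H^{1/2}$ energy estimate only bounds the nonlinear term by $M\,\|u\|^2_{\dot H^{3/2}}$, a top-order quantity with constant $M$, not small, hence not absorbable for arbitrary $M$). This is where self-improvement enters. Using the parabolic smoothing in $B$ to trade the $\mathcal L^\infty_T$ a priori control for finite time integrability together with extra derivatives, I would set up a bootstrap in the regularity scale: assuming $u$ already bounded in $\mathcal L^{a}_T(\Be{\sigma}{p}{\lambda})$ on the critical line $\sigma=\frac3p-1+\frac2a$, estimate $R(u,u)$ with one factor in the a priori space and the other in this space; once $\sigma$ is large enough that the sum of the two regularities is positive the remainder becomes summable, and the smoothing of $B$ returns a strictly better $\sigma$. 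Iterating this gain (finitely many steps, their number depending only on $p$) drives the solution to $u\in\mathcal L^{q}_T(\Be{3/p-1+2/q}{p}{q})$; the absence of smallness is compensated not by a fixed point but by this continuation in the regularity index.

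Finally, the hypothesis $q<2p'$ is precisely what makes the endpoint land in positive regularity: $q<2p'=\frac{2p}{p-1}$ is equivalent to $\frac2q>1-\frac1p$, whence $\sigma=\frac3p-1+\frac2q>\frac2p>0$. With $a=q<\infty$ and $\sigma>0$ the target space of the reduction is reached, Serrin's criterion applies, and $u$ extends uniquely past $T$. The delicate point, requiring the full strength of the Besov ($\ell^q$) structure rather than a Lebesgue norm, is the bilinear remainder estimate at negative critical regularity: the time-integrability gain from the heat flow is what allows the $\ell^q$ and Hölder-in-time bookkeeping to close, and the threshold $2p'$ is exactly the value keeping this consistent while yielding a positive final regularity.
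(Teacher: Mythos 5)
Your overall scheme (reduce to a space--time bound with finite time integrability and positive regularity, handle the paraproducts with the a priori bound alone, isolate the high--high remainder as the obstruction) identifies the right difficulty, but the step that constitutes the actual content of the theorem --- the bootstrap supposed to overcome that obstruction --- does not close, and cannot close in the form you propose. The problem is one of homogeneity. Any estimate of the remainder must place at least one factor of $u$ in a space of positive regularity, hence in the unknown bootstrap norm $X=\|u\|_{\mathcal{L}^a_T(\Be{3/p-1+2/a}{p}{\lambda})}$; the other factor can at best sit in the known a priori space, whose norm is $M$ but which carries \emph{no} time integrability (it is $\mathcal{L}^\infty_T$). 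Since the output must recover the full time exponent $1/a$ and the known factor contributes $0$ to it, every admissible splitting --- including any interpolation between the a priori space and the bootstrap space, both being critical --- yields a bound of the form $\|B(R(u,u))\|\lesssim M\,X$, exactly linear in $X$ with constant of order $M$. You note yourself that such a constant is not absorbable; no finite iteration repairs this, because each new level reproduces the same linear self-referential structure. There is also an arithmetic failure: your pairing (one factor at regularity $3/p-1$, one at $3/p-1+2/a$) is summable only if $2(3/p-1)+2/a>0$, i.e.\ $a<p/(p-3)$, and for $p>5$ one has $p/(p-3)<2p'$, so the claimed endpoint $a=q$ with $q$ near $2p'$ is out of reach of your iteration for large $p$ no matter how many steps are taken. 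Finally, the initialization is missing: local Cauchy theory gives finiteness of strong norms on $[0,T']$ for $T'<T$, but with constants that may blow up as $T'\to T$, so it cannot serve as the quantitative starting point of a bootstrap that must be uniform on $[0,T[$.

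The paper's proof is built precisely to circumvent this linearity obstruction, and differs from yours in two structural ways. First, it never improves $u$ itself: it iterates Duhamel, $u=u_L+B(u_L,u_L)+2B(u_L,w_2)+B(w_2,w_2)$ with $w_2=u-u_L$, so that (i) terms built only from $u_L$ are controlled by the \emph{data} regularity ($u_0\in \dot H^{1/2}\subset L^3$), (ii) the cross term $B(u_L,w_2)$ is not improved at all but placed in a ``good'' bucket $v$ bounded in $L^3\cap \Be{3/p-1}{p}{q}$ (Lemma \ref{crosstermL3B}), and (iii) only the genuinely quadratic fluctuation term $B(w_2,w_2)$ is improved, in $L^\infty_t$-based critical spaces $B_{3/(2(1-\e))}$ and then $B_{1/(1-\e)}$, where convexity of Besov norms makes the unknown appear with a power $\lambda<1$, so that $X\le C+CX^\lambda$ closes with no smallness; this sublinear mechanism is available there because the interpolation is purely spatial and the a priori space, though at the bottom of the scale, still contributes a nonzero share of regularity --- exactly the slack that is absent in your time-integrability scale. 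Second, the conclusion is not the classical Serrin criterion: the decomposition $u=v+w$ plus interpolation (this is where $q<2p'$ enters --- it forces the third index of the interpolated space to be at most $3$, so that it embeds into $L^3$) gives $u\in L^\infty([0,T[;L^3)$, and the paper then invokes the Escauriaza--Seregin--\v Sver\'ak endpoint theorem as a black box. Your route, which claims to land in a Serrin space with finite time exponent and thereby bypass ESS entirely, would be a substantially stronger result than what the paper proves; that it rests entirely on the unclosable step above is the clearest sign of the gap.
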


We remark that our hypothesis allows for smooth, compactly supported
data; actually, one may simply assume that the vorticity~$\omega_0=\nabla \wedge u_0$ belongs to~$L^{3/2}$. By Sobolev embedding and the Biot-Savart law, this
implies that~$u_0$ belongs to~$\dot H^\frac 1 2 \subset \Be { 3/p-1} p {2}$. Hence
by local Cauchy theory so does $u$ and \eqref{eq:apriori2} is finite
at least for small times.

It is of independent interest to consider the case of $L^3$ data,
without any extra regularity hypothesis:
\begin{thm}
  \label{th2}
  Let $u$ be a local in time strong solution to (NS) with data
  $u_{0}$ in~$ L^3 \cap \Be { 3/p-1} p q$, with $3<p<+\infty$ and $q<
  2p'$. Assume that
  \begin{equation}
    \label{eq:apriori2bis}
    \sup_{t\in [0,T[} \| u(\cdot,t)\|_{ \Be
      { 3/p-1} p q} <+\infty,
  \end{equation}
  then the solution may be uniquely extended past time $T$.
\end{thm}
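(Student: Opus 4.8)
The plan is to run, for $L^3$ data, the same self-improving scheme that underlies Theorem \ref{th1bis}, but with the critical space $L^3$ playing the role of $\dot H^{1/2}$. First I would record the local theory: by Kato's theorem \cite{kato}, the datum $u_0\in L^3$ generates a unique maximal strong solution $u\in\mathcal C([0,T^\ast[;L^3)$, smooth for positive times, and it suffices to prove $T<T^\ast$. One cannot simply restart at some $t_0\in\,]0,T[$ and invoke Theorem \ref{th1bis}: although $u(\cdot,t_0)$ is smooth, its low frequencies are essentially those of $u_0$ and need not lie in $L^2$, so $u(\cdot,t_0)\notin\dot H^{1/2}$ in general (take $u_0$ a widely separated superposition $\sum_n a_n\varphi(\cdot-x_n)$ with $\sum|a_n|^3<\infty$ but $\sum|a_n|^2=\infty$; then $u_0\in L^3\cap\Be{3/p-1}{p}{q}$ yet $u_0\notin\dot H^{1/2}$). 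Hence the case of $L^3$ data must be treated on its own, the Kato $L^3$ flow replacing the $\dot H^{1/2}$ linear evolution.

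The heart of the matter is the conversion of the endpoint assumption into a subcritical bound. Writing $u=S(t)u_0+B(u,u)$ as in \eqref{eq:nsi}, the parabolic smoothing of the heat flow upgrades the $L^\infty_t$ control \eqref{eq:apriori2bis} into space-time regularity: setting
\begin{equation*}
  \sigma:=\frac 3p-1+\frac 2q,
\end{equation*}
the hypothesis $q<2p'$ forces $\sigma>\frac 2p>0$, so that $\Be{\sigma}{p}{q}$ is \emph{subcritical} (its scaling exponent $\sigma-3/p=-1+2/q$ exceeds $-1$). The goal is then to establish
\begin{equation*}
  u\in\widetilde L^q\bigl([0,T[;\Be{\sigma}{p}{q}\bigr),
\end{equation*}
the linear part being harmless since $S(t)u_0\in\widetilde L^q([0,T[;\Be{\sigma}{p}{q})$ with norm $\|u_0\|_{\Be{3/p-1}{p}{q}}$, precisely because $u_0\in\Be{3/p-1}{p}{q}$. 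Everything thus reduces to a bilinear estimate controlling $B(u,u)$ in this subcritical space-time space, fed by the a priori critical bound \eqref{eq:apriori2bis} together with the $L^3$ norm of the solution.

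To close I would localise in time: the finiteness of $q$ (equivalently $q<2p'<+\infty$) makes the controlling norm time-integrable, so $[0,T[$ splits into \emph{finitely many} subintervals on each of which the bilinear operator is a contraction in the subcritical norm, the number of pieces being bounded in terms of the constant in \eqref{eq:apriori2bis}. This yields $u\in\widetilde L^q([0,T[;\Be{\sigma}{p}{q})$; choosing a time $t_1$ close to $T$ at which $\|u(\cdot,t_1)\|_{\Be{\sigma}{p}{q}}$ is small (possible by time-integrability), the subcritical local theory gives an existence time bounded below independently of $t_1$, hence a continuation past $T$. Uniqueness in the Kato class identifies this continuation with $u$, so $T<T^\ast$. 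Note that the backward-uniqueness and blow-up machinery of \cite{ess} is not needed here, in keeping with the self-improving philosophy.

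The main obstacle is the bilinear estimate itself at the borderline $q<2p'$. Since the solution is controlled only at the negative regularity $3/p-1<0$, the product $u\otimes u$ is delicate: the paraproducts are immediate, but the resonant (remainder) term has total regularity $2(3/p-1)<0$ and is not controlled by the critical Besov norm alone. This is exactly where the extra information $u\in L^3$ must be used, and where the non-Hilbertian nature of $L^3$ forbids the energy method available in the $\dot H^{1/2}$ setting of Theorem \ref{th1bis}, forcing one to work throughout with the Chemin--Lerner spaces $\widetilde L^\rho_t(\Be{s}{p}{q})$ and the paraproduct calculus. Tracking the dependence of all constants on $q$ as $q\nearrow 2p'$, where $\sigma\searrow 0$ and the subcritical gain degenerates, is the quantitative crux of the argument.
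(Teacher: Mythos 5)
Your closing step is correct as far as it goes: if one knew $u\in \mathcal{L}^{q}([0,T[\,;\Be{\sigma}{p}{q})$ with $\sigma=3/p-1+2/q$, then time-integrability of $\|u(t)\|_{\Be{\sigma}{p}{q}}$ would produce times $t_1\nearrow T$ with $\|u(t_1)\|^q_{\Be{\sigma}{p}{q}}\leq \varepsilon/(T-t_1)$, and since the regularity gain over the critical index is exactly $2/q$, the local existence time from $t_1$ would exceed $T-t_1$, giving the continuation. The fatal gap is in the step that is supposed to produce this bound. First, despite its spatial regularity $\sigma>0$, the norm $\mathcal{L}^{q}_T(\Be{\sigma}{p}{q})$ is \emph{not} subcritical: $\sigma=-1+3/p+2/q$ is precisely the scaling relation of Definition \ref{raah11} and of the paper's spaces $\Lb{\rho}{p}$, so this is a scale-invariant space-time norm, and its finiteness up to time $T$ is information of exactly the same criticality as the conclusion of the theorem. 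Second, the proposed mechanism for obtaining it --- splitting $[0,T[$ into finitely many intervals, in number bounded by $M=\sup_t\|u(t)\|_{\Be{3/p-1}{p}{q}}$, on each of which the Duhamel operator contracts --- is circular. A contraction on $[t_j,t_{j+1}]$ needs smallness of the free evolution $S(t-t_j)u(t_j)$ in $\mathcal{L}^{q}([t_j,t_{j+1}];\Be{\sigma}{p}{q})$; this quantity is bounded by $CM$, and for a \emph{fixed} datum it does tend to $0$ with the interval length, but the rate depends on the Littlewood--Paley tail of $u(t_j)$, not on its norm. Since the family $\{u(t_j)\}_{t_j\nearrow T}$ is only norm-bounded and may concentrate at ever higher frequencies, no finite collection of intervals, in number depending only on $M$ and $u_0$, makes all these linear contributions small. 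This is exactly the obstruction that makes critical blow-up criteria hard: boundedness of a critical norm gives no lower bound on the local existence time. Were this soft argument correct, it would yield an elementary, backward-uniqueness-free proof of an Escauriaza--Seregin--\v Sver\'ak-type criterion; no such proof is known, and this paper does not claim one. A further symptom of the same issue: your bilinear estimate is to be ``fed by the $L^3$ norm of the solution'', but no bound on $\sup_{t<T}\|u(t)\|_{L^3}$ is among the hypotheses --- only $\|u_0\|_{L^3}$, hence bounds on $u_L$, may be used; a uniform $L^3$ bound on $u$ up to time $T$ is precisely what has to be proved.

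Indeed, the paper's proof is structurally different and does rely on \cite{ess}. Proposition \ref{keyprop} splits $u=v+w$ with $v=u_L+B(u_L,u_L)+2B(u_L,w_2)$ bounded in $L^3\cap \Be{3/p-1}{p}{q}$ and $w=B(w_2,w_2)$ bounded in the scale-$(-1)$, high-regularity space $B_{1/(1-\varepsilon)}$; only norms of \emph{pieces} of $u$ are improved, never an existence time, which is how the criticality obstruction is sidestepped. The hypothesis $q<2p'$ is then used not to create positive regularity (your $\sigma>0$) but in the interpolation $\|w\|_{L^3}\lesssim\|w\|_{\Be{3/r-1}{r}{b}}\lesssim \|w\|^{\theta}_{B_{1/(1-\varepsilon)}}\|w\|^{1-\theta}_{\Be{3/p-1}{p}{q}}$, where $q<2p'$ guarantees that the interpolated third index satisfies $b\leq 3$, so that Sobolev embedding lands in $L^3$. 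This gives $u\in L^\infty([0,T[\,;L^3)$, and the Escauriaza--Seregin--\v Sver\'ak theorem then provides the extension past $T$; its backward-uniqueness machinery is essential to the argument, not an optional convenience.
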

The restriction on $q$ for the data implies that $q<3$ as $p>3$. As
such, our result does not include the $L^3$ case, as we are still
assuming a subtle decay hypothesis through the $q$ indice. However,
the restriction is mostly technical and all is required to lift it is
to generalize the results from \cite{GKP}, most specifically the
compactness result which is only stated in $L^3$ rather than in
the Besov scale. This will be adressed elsewhere, providing
generalizations of the present note and the results of \cite{GKP}. Our
purpose here is to illustrate that these blow up criterions do not
require positive regularity on the data; in fact, they will extend to
non $L^3$ data into the negative Besov scale.

Both Theorem \ref{th1bis} and \ref{th2} rely crucially on improving
the rather weak a priori bound on~$u$ from the hypothesis. Such
``self-improvements'' are of independent interest and we state
examples of them below. We start with a (spatial) regularity
improvement for negative Besov-valued data (see the forthcoming Remark
\ref{rem-it} on the $p$ range restriction which is only technical).
\begin{thm}
  \label{th1}
  Let $u$ be a local in time strong solution to (NS) with data
  $u_{0}\in \Be { 3/p-1} p q$, with~$3<p<6$ and $q<+\infty$. Assume
  that
  \begin{equation}
    \label{eq:apriori}
    \sup_{t\in [0,T[} \| u(\cdot,t)\|_{ \Be
      { 3/p-1} p \infty} \leq M,
  \end{equation}
  then we have the following improved uniform bound on
  $w_3=u-u_{L}-B(u_L,u_L)$,
 \begin{equation}
 \label{eq:iap}
 \sup_{t\in [0,T[} \| w_3(\cdot,t)\|_{ \Be{ \frac 1 2} 2 \infty} \leq C(M),
 \end{equation}
  where $C$ is an explicit smooth function of its argument.
\end{thm}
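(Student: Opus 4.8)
The plan is to recast the desired bound as a fixed-point estimate for $w_3$ itself, in (a weak-summability variant of) the Fujita--Kato critical space $\dot H^{1/2}=\Be{1/2}{2}{2}$, of which $\Be{1/2}{2}{\infty}$ is the natural $\ell^\infty$-in-frequency relaxation. Writing $v:=u-u_{L}=B(u,u)$ and using $v=B(u_{L},u_{L})+w_3$ together with bilinearity of $B$, one expands
\[
w_3=B(u,u)-B(u_{L},u_{L})=B(u_{L},v)+B(v,u_{L})+B(v,v),
\]
and substitutes $v=B(u_{L},u_{L})+w_3$ to reach a closed equation
\[
w_3=\Phi+\mathcal L(w_3)+B(w_3,w_3),
\]
where $\Phi$ gathers the terms depending on $u_{L}$ alone (namely $B(u_{L},B(u_{L},u_{L}))$, the symmetric term $B(B(u_{L},u_{L}),u_{L})$, and $B(B(u_{L},u_{L}),B(u_{L},u_{L}))$), $\mathcal L$ is linear in $w_3$ with coefficients built from $u_{L}$ and $B(u_{L},u_{L})$, and $B(w_3,w_3)$ is the pure quadratic term. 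Subtracting $u_{L}$ and $B(u_{L},u_{L})$ is what makes this work: every product surviving in $\Phi$, $\mathcal L$ and $B(w_3,w_3)$ pairs either two factors of positive regularity, or a rough factor against a smoothing one, so that no genuinely ill-defined product of two critical negative-regularity distributions ever appears.

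The first block of estimates controls the data. Since $\sup_{[0,T[}\|u(\cdot,t)\|_{\Be{3/p-1}{p}{\infty}}\le M$ and $u$ is continuous in time down to $t=0$ (the data having finite summability $q$), we get $\|u_{0}\|_{\Be{3/p-1}{p}{\infty}}\le M$; as $S(t)$ is bounded on every $\Be{s}{p}{q}$, the linear part $u_{L}=S(t)u_{0}$ then inherits the full scale of heat-smoothing bounds controlled by $M$, e.g. $t^{(\sigma-3/p+1)/2}\|u_{L}(t)\|_{\dot B^{\sigma}_{p}}\lesssim M$ for $\sigma>3/p-1$, equivalently $\|u_{L}\|_{\widetilde L^\rho_T\dot B^{3/p-1+2/\rho}_{p}}\lesssim M$. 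Feeding these into the regularizing operator $B(f,g)=-\int_0^t\mathbb P|\nabla|S(t-s)(fg)(s)\,ds$ — where $|\nabla|$ and the heat semigroup supply one derivative of smoothing and a parabolic gain through maximal regularity — gives first that $B(u_{L},u_{L})$ lands in a positive-regularity space with norm $\lesssim M^2$, and then that $\|\Phi\|_{L^\infty_T\Be{1/2}{2}{\infty}}\le C(M)$. Here the rough factors only ever occur as $u_{L}$, whose time-weighted smoothing replaces the ill-defined fixed-time product by a convergent time integral.

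The second block are the multilinear estimates in the unknown. I would prove that $B$ is bounded on $L^\infty_T\Be{1/2}{2}{\infty}$ (augmented, if needed, by the associated smoothing norm): for the quadratic term $B(w_3,w_3)$ this is essentially the classical Fujita--Kato bilinear estimate in $\dot H^{1/2}$, adapted to the weaker $\ell^\infty$ summability through the Chemin--Lerner refinement; and each summand of $\mathcal L$ is bounded linearly in $w_3$, with operator norm $\le C(M)$, using the smoothing of $u_{L}$ and the positive regularity of $B(u_{L},u_{L})$ against that of $w_3$. The constraint $3<p<6$ enters precisely here, keeping the intermediate Lebesgue exponents produced by the products (notably $p/2<3$ for $u_{L}\otimes u_{L}$) within the range where the Bony paraproduct and the Besov embeddings needed to reach the $L^2$-based target apply. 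With $\Phi$, $\|\mathcal L\|$ and $\|B\|$ all controlled by explicit functions of $M$, and knowing a priori from the local Cauchy theory that $w_3$ is finite in $L^\infty_{T'}\Be{1/2}{2}{\infty}$ for each $T'<T$, a continuation argument — handling the possibly non-small $\mathcal L$ by absorption on short subintervals where its smoothing coefficients are small, then gluing — closes the uniform bound $\sup_{[0,T[}\|w_3\|_{\Be{1/2}{2}{\infty}}\le C(M)$ with $C$ explicit and polynomial in $M$.

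The main obstacle I anticipate lies in the products involving the rough factor $u_{L}$: at fixed time the product of two elements of the negative critical space $\Be{3/p-1}{p}{\infty}$ is undefined, so the whole gain must be extracted from the time integral in $B$ via parabolic maximal regularity and the heat-smoothing of $u_{L}$, while simultaneously converting from the $L^p$-based negative scale to the $L^2$-based positive scale $\Be{1/2}{2}{\infty}$. Getting the endpoint $q=\infty$ summability correct, and checking that $3<p<6$ indeed suffices for every intermediate product and embedding, is where the bulk of the technical care will go.
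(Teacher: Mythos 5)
Your decomposition is exactly the paper's: your identity $w_3=\Phi+\mathcal{L}(w_3)+B(w_3,w_3)$ is the expansion \eqref{eq:defw3}, and your first block (heat smoothing on $u_L$, positive regularity of $B(u_L,u_L)$, product rules for $B$ to bound the trilinear and quadrilinear terms $\Phi$) is in line with what the paper does. The genuine gap is in how you close the estimate, and it is the heart of the matter. You propose to bound $B(w_3,w_3)$ by a Fujita--Kato-type bilinear estimate, i.e.\ quadratically in the target norm, and to tame the non-small linear part $\mathcal{L}$ by time subdivision. Neither can produce the claimed bound $C(M)$. A quadratic right-hand side gives an inequality of the form $X\leq C_0(M)+\gamma X+CX^2$ with $X=\|w_3\|_{L^\infty_t\Be{1/2}{2}{\infty}}$, and a continuity (or bootstrap) argument traps $X$ only under a discriminant condition of the type $4C\,C_0(M)<(1-\gamma)^2$; since no smallness of $M$ is assumed, this fails for large $M$. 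Shortening the time interval does not help: all the norms in play are scale-invariant, so neither $C_0(M)$ nor the bilinear constant gains any power of the interval length. (Separately, $B$ is in fact not bounded on $L^\infty_t\dot H^{1/2}$-type spaces without auxiliary smoothing norms, as you half-acknowledge; that part is repairable, the smallness issue is not.)

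Time subdivision also cannot rescue the linear part while keeping the constant a function of $M$ alone. The coefficients of $\mathcal{L}$ (and of the linearized quadratic term) are sup-in-time critical norms, $\|u_L\|_{\Lb{\infty}{p}}\lesssim M$ and $\|w_3\|_{\Lb{\infty}{p}}\lesssim C(M)$, which do not shrink on short subintervals; the only quantities that do shrink are time-integrated norms of $u_L$, whose modulus of absolute continuity is governed by $\|u_0\|_{\Be{3/p-1}{p}{q}}$ (indeed by the profile of $u_0$), not by $M$, so the number of subintervals --- hence any glued constant --- would not be a function of $M$ alone; moreover at each gluing time the ``data'' $w_3(t_k)$ re-enters with precisely the norm you are trying to bound, and the quadratic recursion diverges. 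The idea your plan is missing, and which is the paper's key device, is to never estimate any term quadratically in the strong norm: in every term containing $w_3$, all factors but one are measured in $B_p$, where the a priori hypothesis \eqref{eq:apriori} bounds them by $C(M)$ (this linearizes $B(w_3,w_3)$ as $B(v,w_3)$ with $\|v\|_{\Lb{\infty}{p}}\lesssim C(M)$); the single remaining factor is then interpolated, $\|w_3\|_{B_r}\lesssim\|w_3\|_{B^{3/q}}^{1-\eta}\|w_3\|_{B_p}^{\eta}$, so the strong norm enters with a sublinear power $1-\eta$, and Young's inequality converts this into $C(\eta)M^{(1+\eta)/\eta}+\gamma\|w_3\|_{\Lh{\infty}{3/q}}$ with $\gamma$ arbitrarily small. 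This absorption works for arbitrary $M$, needs no smallness and no time subdivision, and is what yields the explicit $C(M)$ (the a priori finiteness of the absorbed norm being supplied by local theory, as you correctly note). It also explains why the paper proves Proposition \ref{th1m} in the intermediate spaces $\dot B^{3/q,\infty}_{3q/(q+3)}$ with $p<q<6$ and then concludes by Sobolev embedding into $\Be{1/2}{2}{\infty}$, rather than working directly at the endpoint as you do: the parameters $\varepsilon,\eta,\kappa$ need strict room for the interpolation and product rules to apply.
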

For any initial datum $u_{0}\in \dot{B}^{{-(1-3/p)},q }_{p}$, with
$1\leq p, q<+\infty$, there exists a unique, local in time, strong
solution to (NS). Such solutions were obtained in \cite{Cannone}
for $3<p\leq 6$ and for all finite~$p$ in\ccite{Planchon}, and we
refer to the appendix of \cite{gip3} for a proof which is closer in
spirit to the present note. One should point out that all these Besov
spaces are embedded in $VMO^{-1}$ (limits of smooth, compactly
supported functions in $BMO^{-1}$) and that strong solutions in this
endpoint space were obtained in \cite{KT}.

Strong solutions are known to obey the same space-time estimates as
the heat flow on any compact time interval on which they exist: one
may take advantage of these estimates to improve regularity on
$w=u-u_{L}$ in this context, as was done in \cite{CP} for $L^3$
data and in \ccite{Planchon,gip3} for $\Be {{3/p-1}} p q$ by
substracting further iterates of the heat flow. However, to our
knowledge, the only known result assuming an a priori bound with no
time integrability was proved in \ccite{CP} where the conclusion of
Theorem \ref{th1} is obtained assuming a slightly weaker condition than~$u\in
L^\infty_{t} L^{3}_{x}$ (the Lebesgue space is replaced by its larger
weak counterpart).

Finally, we provide a time regularity improvement, whose proof can be used
to obtain Theorem \refer{th1} in the range~$p\leq 4$, but should be of
independent interest.
\begin{thm}
  \label{thholder}
  Let $u$ be a local in time strong solution to (NS) with data
  $u_{0}\in \Be { -1/4} 4 4$. Assume that
  \begin{equation}
  \label{eq:apriorider}
   \sup_{t\in [0,T[} \| u(\cdot,t)\|_{ \Be { -1/4} 4 4} \leq M,
  \end{equation}
  then~$u$ has the following H\"older in time regularity: 
    \begin{equation}
 \label{eq:iapbis}
 \forall  (t,t')\in [0,T[^2\,,\ \|u(\cdot,t)-u(\cdot, t')\|_{\Be
   {-3/4}  4 4}
\leq C(M)|t-t'|^{\frac 1 4}.
  \end{equation}
\end{thm}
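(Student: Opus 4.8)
The plan is to fix $0\le t'<t<T$ and to use the mild formulation of (NS) \emph{based at time $t'$}, namely
\[
u(t)-u(t')=\bigl(S(t-t')-\Id\bigr)u(t')-\int_{t'}^{t}\mathbb{P}\,S(t-s)\,\nabla\cdot(u\otimes u)(s)\,ds,
\]
and to estimate the two contributions separately in $\Be{-3/4}{4}{4}$. The linear increment is immediate from the heat-flow characterisation of the Besov norm: the Fourier multiplier $e^{-(t-t')|\xi|^2}-1$ is bounded by $\bigl((t-t')|\xi|^2\bigr)^{1/4}$, so $S(t-t')-\Id$ maps $\Be{-1/4}{4}{4}$ into $\Be{-3/4}{4}{4}$ with norm $\lesssim (t-t')^{1/4}$. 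Combined with \eqref{eq:apriorider} this gives $\|(S(t-t')-\Id)u(t')\|_{\Be{-3/4}{4}{4}}\le CM\,(t-t')^{1/4}$, which already exhibits the desired H\"older exponent.

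For the Duhamel term I would trade the $1/2$-derivative gap between $-3/4$ and $-1/4$ against a time weight, using parabolic smoothing together with a product law. Writing $\mathbb{P}\nabla\cdot$ as $|\nabla|$ composed with operators of Riesz type, the smoothing estimate $\|\mathbb{P}\,S(\tau)\nabla\cdot G\|_{\Be{-3/4}{4}{4}}\lesssim \tau^{-1/4}\|G\|_{\Be{-1/4}{4}{4}}$ holds, since $S(\tau)$ supplies the missing $1/2$ derivative at the cost of $\tau^{-1/4}$ and $\mathbb{P}$ and the Riesz transforms are bounded on every Besov space. Applying Minkowski's inequality to the Besov norm of the time integral yields
\[
\Bigl\|\int_{t'}^{t}\mathbb{P}\,S(t-s)\nabla\cdot(u\otimes u)(s)\,ds\Bigr\|_{\Be{-3/4}{4}{4}}\lesssim \int_{t'}^{t}(t-s)^{-1/4}\,\|(u\otimes u)(s)\|_{\Be{-1/4}{4}{4}}\,ds.
\]
The product law $\Be{1/4}{4}{4}\cdot\Be{1/4}{4}{4}\hookrightarrow\Be{-1/4}{4}{2}\hookrightarrow\Be{-1/4}{4}{4}$, valid because the regularities add up to $1/2>0$, gives $\|(u\otimes u)(s)\|_{\Be{-1/4}{4}{4}}\lesssim \|u(s)\|_{\Be{1/4}{4}{4}}^{2}$. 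Since $(t-s)^{-1/4}\in L^{2}_{s}([t',t[)$ with norm $\sqrt2\,(t-t')^{1/4}$, Cauchy--Schwarz in time bounds the right-hand side by $(t-t')^{1/4}\,\|u\|_{L^{4}([t',t[;\Be{1/4}{4}{4})}^{2}$, so the whole nonlinear contribution is $\lesssim (t-t')^{1/4}\,\|u\|_{L^{4}([0,T[;\Be{1/4}{4}{4})}^{2}$, again with the correct power of $t-t'$.

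The proof thus reduces to the space--time bound $\|u\|_{L^{4}([0,T[;\Be{1/4}{4}{4})}\le C(M)$, and this is where the genuine difficulty lies. The hypothesis \eqref{eq:apriorider} controls only the scaling-critical \emph{negative}-regularity norm $L^\infty_t\Be{-1/4}{4}{4}$, in which $u\otimes u$ is not even defined (the regularities add up to $-1/2<0$), so no naive estimate can close; what is needed is precisely a self-improvement converting the a priori bound in the negative scale into the positive-regularity space--time bound above. The free evolution $u_{L}=S(t)u_{0}$ belongs to $L^{4}(\R^{+};\Be{1/4}{4}{4})$ for free, by the maximal $L^4$-regularity (trace) characterisation of $\Be{-1/4}{4}{4}$; the real content is to propagate the analogous control to $w=u-u_{L}=B(u,u)$, which should gain regularity from the a priori bound through the same mechanism that underlies Theorem~\ref{th1}. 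Establishing this bound uniformly on $[0,T[$ with a constant depending only on $M$ is the main obstacle; once it is available, combining it with the linear and nonlinear estimates above yields \eqref{eq:iapbis}.
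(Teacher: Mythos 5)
Your decomposition of $u(t)-u(t')$ and your estimate for the linear increment $(S(t-t')-\Id)u(t')$ coincide with the paper's, and the smoothing-plus-product estimate you give for the Duhamel term is correct as far as it goes. But the proposal then reduces the theorem to the bound $\|u\|_{L^{4}([0,T[;\Be{1/4}{4}{4})}\leq C(M)$ and stops, acknowledging this as ``the main obstacle''. That bound is not a deferrable technicality: it is a scaling-critical, positive-regularity space--time bound on $u$ itself, it cannot follow from \eqref{eq:apriorider} by any perturbative product estimate (as you yourself note, $u\otimes u$ is not even defined in $\Be{-1/4}{4}{4}$), and a uniform critical bound of this Serrin type would essentially by itself yield continuation of the solution past $T$ --- i.e.\ it is of the same strength as the regularity criteria this paper is built to prove. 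So the proof is genuinely incomplete: the self-improvement step, which is the entire content of the theorem, is missing.

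The paper's proof shows that no such space--time bound on $u$ is needed, and the mechanism it uses instead is the key idea your proposal lacks. One performs the scaled $L^{2}$ energy estimate of Proposition \ref{scaledEnergy} on the fluctuation based at time $t_0$, i.e.\ on $w_{t_0}(t)=u(t)-S(t-t_0)u(t_0)$, which solves \eqref{eq:mns} with zero data at $t_0$. The quadratic-in-$w$ transport terms disappear from the energy identity by the divergence-free condition, so the right-hand side involves only the heat flow of $u(t_0)$; after inserting the weight $(t-t_0)^{-1/2}$ and applying Gronwall, one is left with $\int_0^{\infty}\tau^{1/2}\|S(\tau)u(t_0)\|_{L^4}^{4}\,d\tau/\tau$ and $\exp\bigl(C\int_0^{\infty}\tau\|S(\tau)u(t_0)\|_{L^5}^{5}\,d\tau/\tau\bigr)$, which are exactly equivalent norms for $\|u(t_0)\|_{\Be{-1/4}{4}{4}}$ and $\|u(t_0)\|_{\Be{-2/5}{5}{5}}$, both controlled by $C(M)$ thanks to \eqref{eq:apriorider} and Besov embedding. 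This gives $\|u(t)-S(t-t_0)u(t_0)\|_{L^2}\leq C(M)|t-t_0|^{1/4}$ directly, uniformly in $t_0$, and the embedding $L^{2}\hookrightarrow\Be{-3/4}{4}{4}$ then finishes the proof, the linear increment being handled exactly as you did. To complete your route you would have to supply an argument of precisely this type (or the Littlewood--Paley bootstrap of Proposition \ref{B-1/4to1/23}): the crucial point is that an energy estimate on the fluctuation kills the terms that are out of reach of the negative-regularity hypothesis, whereas your reduction keeps them and therefore demands positive regularity of $u$ that is never established.
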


For notational convenience, set, for any $1\leq p$,
$$
B_{p} = \dot B^{-(1-3/p),\infty}_{p}, \,\,\, B^s=B_{p} \,\,\text{ with
}s-3/p=-1.
$$
In other words, indices are tight by scaling and we indifferently use
regularity or decay to label spaces with scale $-1$. In what follows,
we shall also need a suitable modification of Besov spaces, taking
into account the time variable.
\begin{definition}
  \label{raah11}
  For $1\leq \rho \leq +\infty$, we shall say that $u(x,t)$ belongs
  to~$ {\mathcal{L}^{\rho}}([ a,b ] ;\dot{B}^{s,q}_{p})$ if~$u(t)$ is in~$ \dot B^{s,q}_{p}$ for all $t\in [a,b]$ and
  \begin{equation*}
    \int_0^{+\infty} \sigma^{-sq}\|Q(\sigma) u\|^q_{L^\rho([ a,b
      ];L^{p}_{x})} \, \frac{d\sigma}{\sigma} <+\infty\ .
  \end{equation*}
  The associated norm is defined in the obvious way and $
  {\mathcal{L}^{\rho}_{T}}(\dot{B}^{s,q}_{p}) :=
  {\mathcal{L}^{\rho}}([ 0,T ] ;\dot{B}^{s,q}_{p})$.
\end{definition}
As before, we will adopt the following shorthand notation
$$
\Lb \rho p = \Lh \rho s = \LB \rho s p \infty \,\,\text{ with }\,
s=-1+3/p+2/\rho,
$$
which is consistent with the previous one: $L^\infty_{t} B_{p}=\Lb
\infty p$.

We will denote by $\lesssim$ a less or equal sign with a harmless
constant, and $C$ any irrelevant constant which may change from line
to line.

\section{From a priori bounds to a generalized endpoint Serrin's criterion}
From Sobolev's embedding, Theorem \ref{th1bis} immediately follows from
Theorem \ref{th2}. In turn, Theorem \ref{th2} is a consequence of the
following key proposition.
\begin{prop}
  \label{keyprop}
Let $u$ be as in Theorem \ref{th2}. Then there exists a decomposition
$u=v+w$ such that
\begin{equation}
  \label{eq:splitv}
  \sup_{t\in [0,T[}\|v(\cdot,t)\|_{L^3\cap \dot B^{3/p-1,q}_p}\leq C(M,u_0),
\end{equation}
\begin{equation}
  \label{eq:splitw}
\sup_{t\in [0,T[}\|w(\cdot,t)\|_{ B_{1/(1-\varepsilon)}} \leq C(M,u_0),
\end{equation}
where $\varepsilon$ may be chosen arbitrarly small.
\end{prop}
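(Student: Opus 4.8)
The plan is to build the decomposition from a finite bootstrap on the Duhamel term $B$, exploiting that each application of $B$ trades integrability for regularity while pinning the scaling at $-1$. I would work throughout in the spaces $\mathcal{L}^\rho_T(\dot B^{s,q}_p)$ of Definition \ref{raah11}, and first set up two building blocks. On the linear side, the heat characterization underlying Definition \ref{d1} gives $u_L=S(t)u_0\in \mathcal{L}^\rho_T(\dot B^{s,q}_p)$ for every $\rho$ with $s=-1+3/p+2/\rho$, with norm controlled by $\|u_0\|_{\dot B^{3/p-1,q}_p}$; moreover $u_L\in L^\infty_T L^3$ since $u_0\in L^3$. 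On the bilinear side I would establish an estimate of the shape $\|B(f,g)\|_{\mathcal{L}^{\rho_3}_T(\dot B^{s_3,q_3}_{p_3})}\lesssim \|f\|_{\mathcal{L}^{\rho_1}_T(\dot B^{s_1,q_1}_{p_1})}\,\|g\|_{\mathcal{L}^{\rho_2}_T(\dot B^{s_2,q_2}_{p_2})}$, with $1/p_3=1/p_1+1/p_2$, all three spaces sitting at scale $-1$, and the time and regularity indices coupled through the one-derivative gain of $|\nabla|S(t-s)$ against the parabolic smoothing (a Young inequality in the time variable). The decisive point is that $p_3<\min(p_1,p_2)$: every bilinear product drives the integrability index strictly down toward $1$, equivalently the regularity index up toward $2$.

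Next I would run the bootstrap by peeling off heat-flow iterates, exactly as for $w_3$ in Theorem \ref{th1}. Writing $u=u_L+B(u,u)$ and setting $w^{(1)}=B(u,u)$, feeding the a priori bound $u\in L^\infty_T B_p$ into the bilinear estimate places $w^{(1)}$ at integrability index $p/2$. Subtracting the next explicit iterate and using bilinearity, $w^{(2)}=u-u_L-B(u_L,u_L)=B(w^{(1)},u)+B(u_L,w^{(1)})$ pairs the improved $w^{(1)}$ with base-level factors and lands at index $p/3$; inductively $w^{(k)}$ sits at index $p/(k+1)$. After finitely many steps the index drops below any prescribed level $>1$, and a final tuning of the last pairing (using the freedom in $\varepsilon$) reaches exactly $B_{1/(1-\varepsilon)}$, at regularity $2-3\varepsilon$. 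Since each $w^{(k)}$ involves $u$ only through the base slots, its bound is of the form $C(M,u_0)$.

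Finally I would read off $u=v+w$. I set $v$ equal to $u_L$ plus the finitely many explicit iterates $B(u_L,u_L),\dots$ subtracted along the way, and $w=w^{(N)}$ the last remainder. The linear piece $u_L$ lies in $L^3\cap\dot B^{3/p-1,q}_p$ because $u_0$ does, and the higher iterates, being strictly more regular at the same scale, embed into $\dot B^{3/p-1,q}_p$ and are controlled in $L^3$ by the standard $L^3$ (Kato) estimates for $B$ with Beta-function time integrals, uniformly on $[0,T[$ by $\|u_0\|_3$; this gives \eqref{eq:splitv}. The remainder $w$ inherits $B_{1/(1-\varepsilon)}$ from the last bootstrap step, giving \eqref{eq:splitw}, with all constants of the form $C(M,u_0)$.

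The main obstacle I expect is closing the bilinear estimate sharply enough that the scheme improves at every step and terminates after finitely many. Because the base regularity $3/p-1$ is negative, the product $u\otimes u$ must go through the Bony decomposition, and the resonant (high-high) interaction is the delicate term: it lands at the very negative regularity $6/p-2$ and only the parabolic gain in $B$ recovers it, at the cost of the summability condition $q<2p'$ on the third index. Keeping the scale locked at $-1$ throughout, tracking the time index $\rho$ so that the final step returns to the endpoint $\rho=\infty$ demanded by the $\sup_t$ in \eqref{eq:splitw}, and arranging the geometric descent of the integrability index to be uniform, is where the bookkeeping is most delicate.
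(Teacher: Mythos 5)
Your scheme breaks down at its core step: the claimed geometric descent of the integrability index ($w^{(k)}$ at index $p/(k+1)$, descending below any level above $1$) is false, because every remainder $w^{(N)}$ contains a fresh cross term $B(u_L,w^{(N-1)})$, and such cross terms are capped at $B_{3/(2(1-\varepsilon))}$ no matter how regular $w^{(N-1)}$ is. Concretely, your bilinear estimate with exact H\"older addition $1/p_3=1/p_1+1/p_2$ fails in the paraproduct where $u_L$ carries the high frequencies: for any function $f$ at scale $-1$, the low-frequency sums $S_{j-1}f$ are controlled only in $L^m$ with $m>3$ (with growth $2^{j(1-3/m)}$), while the high frequencies of $u_L$ live at best in $L^3$-based spaces of regularity zero; H\"older then puts the product at a Lebesgue index $p_3$ with $1/p_3=1/m+1/3<2/3$, i.e.\ $p_3>3/2$, and since $B$ gains exactly one derivative (preserving the scale $-1$), the output never does better than $B_{3/(2(1-\varepsilon))}$. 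This is precisely the obstruction the paper flags before Lemma~\ref{cas2} (``the part of $B(u_L,w_2)$ which carries high frequencies of $u_L$ has no reason to be any better than $B_{3/(2(1-\varepsilon))}$''), and it forces a decomposition different from yours: the cross term $2B(u_L,w_2)$ goes into $v$, not $w$, and bound \eqref{eq:splitv} for it is proved via Lemma~\ref{crosstermL3B}, which exploits the time-smoothing of the heat flow ($u_L\in\mathcal{L}^{2p'}_t(\dot B^{1/p',3}_3)$, resp.\ $\mathcal{L}^{1}_t(\dot B^{3/p+1,q}_p)$) --- an ingredient absent from your proposal. Only $B(w_2,w_2)$ is kept in $w$, and it reaches $B_{1/(1-\varepsilon)}$ exactly because, after an intermediate bootstrap, \emph{both} of its factors can be placed in $B_{3/(2(1-\varepsilon))}\cap B_{3/(1-\varepsilon)}$, i.e.\ at positive regularity; a factor $u_L$ can never be so upgraded.

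Two further steps are also unjustified. First, your starting point, ``$u\in L^\infty_T B_p$ places $w^{(1)}=B(u,u)$ at index $p/2$'', is not a valid estimate for $p>3$: both factors then have negative regularity $3/p-1$, the high-high interaction needs the sum of regularities $6/p-2$ to be positive, and an $L^\infty$-in-time bound provides no time integrability for the parabolic gain in $B$ to compensate (your appeal to that gain, and your attribution of the condition $q<2p'$ to this term, is a misdiagnosis --- that condition is used in the deduction of Theorem~\ref{th2} from Proposition~\ref{keyprop}). The paper never estimates $B(u,u)$ directly; it uses the identity \eqref{eq:w2}, where every term either contains $u_L$ (which has genuine $\mathcal{L}^\rho_t$ smoothing from the heat flow, not from the a priori bound) or is quadratic in $w_2$ and is closed by an absorption argument using convexity with exponent $\lambda<1/2$ (whence $\varepsilon<3/(4p)$). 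Second, your bound \eqref{eq:splitv} for the pure heat iterates cannot be obtained by embedding: a space that is ``more regular at the same scale'' with third index $\infty$ does not embed into $\dot B^{3/p-1,q}_p$ with $q$ finite, since same-scale Besov embeddings cannot decrease the third index; these bounds must be proved directly by bilinear heat estimates that propagate $q$, as in \eqref{eq:crosstermBp}.
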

Postponing the proof of this proposition for a moment, we prove Theorem
\ref{th2}: notice that \refeq{eq:splitv} provides an a priori bound
for the $v$ part in $L^\infty([0,T[;L^3)$; we seek to obtain a similar
bound for the $w$ part. As $w=u-v$, we also have a bound on $w$ in
$L^\infty([0,T[;\dot B^{3/p-1,q}_p)$, from \eqref{eq:splitv} and
\eqref{eq:apriori2bis}. As~$q<2p'$, let us write
$$
q= \frac {2p'} {1+\eta} \quad {\rm with} \  \eta\ {\rm small\ enough.}
$$
Then define
$$
r=\frac 3 {1+2\eta}\,\virgp\ \ \theta=\frac{1+2\eta-3/p}{3(1/p'-3\varepsilon)}\quad\hbox{and}\quad b = \frac q {1-\theta}\,\virgp
$$
and notice that~$b\leq 3$.
We now combine this bound with \eqref{eq:splitw}, using
convexity of norms and Sobolev embedding of Besov spaces into
Lebesgue ones. This gives
$$
\|w\|_{L^3}\lesssim \|w\|_{\dot B^{(3/r-1),b}_{r}} \lesssim \|w\|^\theta_{ B_{1/(1-\varepsilon)}} \|w\|^{1-\theta}_{\dot B^{(3/p-1),q}_{p}}.
$$
As such, we have obtained control of $u=v+w$ in $L^\infty_t(L^3_x)$,
which allows to use the Escauriaza-Seregin-\v Sver\'ak result to conclude
the proof of Theorem \ref{th2}.\cqfd

\medbreak
We now prove Proposition \ref{keyprop}. Note that a local in time solution
with data in $\dot B^{3/p-1,q}_p$ exists and additional regularity is preserved (see for instance\ccite {chemin20}
or\ccite{gip3}). Hence
we do not worry about existence, but rather focus on improving
bounds. It is convenient to present the argument in a rather abstract
 setting. Recall $B$ was defined in \eqref{eq:nsi}, and set
 $w_{2}=u-u_L=B(u,u)$, then
 \begin{equation}
   \label{eq:w2}
w_{2}=B(u_{L},u_{L})+2 B(u_{L},w_{2})+B(w_{2},w_{2})   
 \end{equation}
where we are obviously abusing notations (writing
$B(u,v)=B(v,u)$). Note that from a priori bound
\eqref{eq:apriori2} and local existence theory, we have $u_{L}\in
L^\infty_{t} \dot B^{-(1-3/p),q}_{p}$ with a uniform bound $2M$, while
obviously $u_L \in C_t(L^3_x)$ with bound $\|u_0\|_{L^3}$.

We start with an easy case which
already provides the key features of
the general argument without technicalities.
\begin{lemma}\label{cas1}
  Assume in addition to the hypothesis of Theorem \ref{th2} that
  $\omega_0\in L^{3/2}$; then Proposition \ref{keyprop} holds with
  $v=u_L$, $w=w_2$ and $\varepsilon=0$.
\end{lemma}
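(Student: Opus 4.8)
The plan is to verify the two bounds of Proposition~\ref{keyprop} for the explicit choice $v=u_L$, $w=w_2$. The bound \eqref{eq:splitv} is immediate from the linear theory: since $u_0\in L^3$ and the Stokes flow is an $L^3$ contraction, $u_L\in L^\infty_t L^3$ with bound $\|u_0\|_{L^3}$, while $S(t)$ acts boundedly on $\Be{3/p-1}{p}{q}$, so $u_L\in L^\infty_t\Be{3/p-1}{p}{q}$ with bound $\|u_0\|_{\Be{3/p-1}{p}{q}}$; both are controlled by the data alone. Everything therefore reduces to proving $w_2\in L^\infty([0,T[;\Be{2}{1}{\infty})=L^\infty_t B_1$ with bound $C(M,u_0)$, which is exactly where the endpoint $p=1$ (i.e.\ $\varepsilon=0$) and the extra hypothesis $\omega_0\in L^{3/2}$ enter. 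I would organize the argument around \eqref{eq:w2}, treating $B(u_L,u_L)$ as a source and $2B(u_L,w_2)+B(w_2,w_2)$ as a perturbation.

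The heart of the proof is the source term. First I would record that $\omega_0\in L^{3/2}$ forces $\nabla u_0\in L^{3/2}$: for a divergence free field, $\nabla u_0$ differs from $\omega_0=\nabla\wedge u_0$ by Riesz transforms, which are bounded on $L^{3/2}$. Since the heat flow is an $L^{3/2}$ contraction, $\nabla u_L=S(t)\nabla u_0\in L^\infty_t L^{3/2}$ with bound $\lesssim\|\omega_0\|_{L^{3/2}}$. Combined with $u_L\in L^\infty_t L^3$, Hölder yields the crucial gain $\nabla\cdot(u_L\otimes u_L)\in L^\infty_t L^{1}$, with a bound depending only on $u_0$. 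Writing $B$ in the divergence form of \eqref{eq:nsi}, namely $B(u_L,u_L)=-\int_0^t\mathbb{P}\,S(t-s)\,\nabla\cdot(u_L\otimes u_L)(s)\,ds$, the Leray projector and the Riesz transforms are degree zero multipliers, hence harmless on frequency localized $L^1$ functions; all that remains is the parabolic gain of one derivative in the $p=1$ scale, which places $B(u_L,u_L)$ in $L^\infty_t\Be{2}{1}{\infty}$.

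The subtlety in this last step, and the point I expect to be the main obstacle, is that $L^\infty$-in-time maximal regularity for the heat flow fails by a logarithm at the $q=\infty$ endpoint: estimating $\sup_\sigma\sigma^{-1}\|Q(\sigma)B(u_L,u_L)(t)\|_{L^1}$ crudely from $\nabla\cdot(u_L\otimes u_L)\in L^\infty_t L^1$ produces a divergent factor $\int_0^t ds/(\sigma+t-s)$. I would remove this logarithm by exploiting that $u_L$ is a genuine heat flow and hence enjoys real time integrability, i.e.\ membership in the spaces $\mathcal{L}^\rho_T$ of Definition~\ref{raah11} with $\rho<+\infty$; the product $u_L\otimes u_L$ then lives in a time integrable refinement of $L^\infty_t L^1$ rather than merely in $L^\infty_t L^1$, which is precisely what is needed to run the scale invariant estimate and close the endpoint.

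For the perturbative terms $2B(u_L,w_2)+B(w_2,w_2)$ I would not use the $L^1$ derivative trick, which is special to the purely linear interaction, but the general bilinear estimates for $B$ in the spaces $\mathcal{L}^\rho_T$. The a priori bound \eqref{eq:apriori2bis} controls $u$, hence $w_2=u-u_L$, uniformly in $L^\infty([0,T[;\Be{3/p-1}{p}{q})$, and the parabolic smoothing enjoyed by strong solutions upgrades this weak critical bound to uniform control of $w_2$ in suitable $\mathcal{L}^\rho_T$ norms on all of $[0,T[$. As no smallness is available a priori, I would subdivide $[0,T[$ into finitely many intervals, their number controlled by $M$, on which these critical norms are small, run the contraction for $w_2$ in $L^\infty_t B_1$ on each, and concatenate. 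Together with the source bound this gives $\sup_{t\in[0,T[}\|w_2(\cdot,t)\|_{B_1}\leq C(M,u_0)$ and finishes the case $\varepsilon=0$.
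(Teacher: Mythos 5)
Your reduction of the lemma to bounding $w_2$ in $L^\infty_t B_1$, your verification of \eqref{eq:splitv} for $v=u_L$, and your treatment of the source term $B(u_L,u_L)$ all match the paper's proof: Biot--Savart gives $\nabla u_0\in L^{3/2}$, hence $\nabla\cdot(u_L\otimes u_L)\in L^\infty_t(L^1)$, and heat flow estimates (Proposition 4.1 of \cite{gip3}, which is \eqref{eq:presquefiniter} in the paper) yield $\|B(u_L,u_L)\|_{\Lb{\infty}{1}}\lesssim\|u_0\|_{L^3}\|\nabla u_0\|_{L^{3/2}}$. Your worry about a logarithmic loss at this step is, by the way, unfounded: the target space $B_1=\dot B^{2,\infty}_1$ has third index $\infty$, and on frequency-localized pieces $\int_0^t e^{-c2^{2j}(t-s)}\,ds\lesssim 2^{-2j}$, which is exactly the gain of two derivatives from $L^\infty_t L^1$ data with no divergence; the log only appears if one uses the unlocalized kernel bound, and no time integrability of $u_L$ is needed to avoid it.

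The genuine gap is in your treatment of the perturbative terms $2B(u_L,w_2)+B(w_2,w_2)$. You assert that parabolic smoothing upgrades the critical bound \eqref{eq:apriori2bis} to \emph{uniform} control of $w_2$ in time-integrated spaces $\mathcal{L}^\rho_T$, $\rho<\infty$, on all of $[0,T[$, and that $[0,T[$ can be cut into finitely many intervals, their number controlled by $M$, on which critical norms are small. Neither claim is justified, and both essentially beg the question. Local theory gives finite $\mathcal{L}^\rho$ norms on each compact $[0,t]$ with $t<T$, but not uniformly as $t\to T$; if uniform time-integrated critical bounds up to $T$ were available, non blow-up at $T$ would already follow from classical Serrin-type theory, and neither this lemma nor the Escauriaza--Seregin--\v{S}ver\'ak theorem would be needed --- the whole difficulty of an $L^\infty_t$ hypothesis is precisely that it does not self-improve in this way. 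Likewise, smallness by time-subdivision is a feature of $L^\rho_t$ norms with $\rho<\infty$ of a fixed function; a quantity like $\sup_{t\in I}\|u(t)\|_{B_p}$ does not shrink as $|I|\to 0$, so no finite subdivision makes your contraction coefficients small. The paper closes the estimate with no smallness at all: by Lemma~\ref{productlemma}, $\|B(w_2,w_2)\|_{\Lb{\infty}{1}}\lesssim\|w_2\|_{\Lb{\infty}{p}}\|w_2\|_{\Lb{\infty}{p'}}$ (and similarly for the cross term, using $u_L\in L^\infty_t L^3$ and $u_L\in\Lb{\infty}{3/2}$), and then \emph{convexity} of Besov norms, $\|w_2\|_{B_{p'}}\lesssim\|w_2\|^{\lambda}_{B_1}\|w_2\|^{1-\lambda}_{B_p}$ with $\lambda=(p-2)/(p-1)<1$, converts every occurrence of $w_2$ in an intermediate space into a power of $X:=\|w_2\|_{\Lb{\infty}{1}}$ strictly less than one. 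The resulting inequality $X\lesssim C(u_0)+C(M,u_0)\bigl(X^{\lambda}+X^{\eta}+X^{\theta}\bigr)$, with all exponents below one and $X$ finite by local theory, forces $X\leq C(M,u_0)$ by Young's inequality. This interpolation step --- trading the quadratic structure for sublinear powers of the unknown --- is the essential mechanism of the lemma, and it is the idea your proposal is missing.
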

We just remarked that, even without additional requirements,
\eqref{eq:splitv} holds for $v=u_L$. We are left with proving
\eqref{eq:splitw} for $w_2$: we will use \eqref{eq:w2}. Note that by
the Biot-Savart law,~$\nabla u_0$ belongs to~$L^{3/2}$ and thus~$\nabla u_L$ to~$C_t(L^{3/2})$. By chain rule,
$\nabla_x(u_L\otimes u_L)$ is in~$L^\infty_t(L^1)$. Using  Proposition
4.1 in\ccite{gip3}, we infer
\begin{equation}
  \label{eq:presquefiniter}
\|B(u_{L},u_{L})\|_{\Lb \infty 1} \lesssim \|u_0\|_{L^3} \|\nabla u_0\|_{L^{\frac 32}}.
\end{equation}
Therefore, we seek an a priori bound for $w_2$ in $\Lb \infty 1$ from
the weaker bound \eqref{eq:apriori2} on $u$.

To deal with the remaining terms in \eqref{eq:w2}, we use
the following lemma:
\begin{lemma}\label{productlemma}
 Let $1\leq r< 3<p<+\infty$, $f,g\in \Lb \infty {r}\cap \Lb \infty
 p$,  $2/3<1/r+1/p\leq 1$ and $1/\eta\leq 1/r+1/p$, then
  \begin{equation}
    \label{eq:bootfaciledual}
    \| B(f,g)\|_{\Lb \infty \eta } \lesssim \| f\|_{\Lb \infty {r}} \|g
    \|_{\Lb \infty {p}}+ \| g\|_{\Lb \infty {r}} \|f\|_{\Lb \infty {p}}\,.
  \end{equation}
If $p=3$, the same estimate holds with $B_3$ replaced by $L^3_x$,
  \begin{equation}
    \label{eq:bootfacile}
    \| B(f,g)\|_{\Lb \infty \eta} \lesssim \| f\|_{\Lb \infty {r}}\|g\|_{L^\infty_t(L^3_x)}+ \| g\|_{\Lb \infty {r}}\|f\|_{L^\infty_t(L^3_x)}\,.
  \end{equation}
\end{lemma}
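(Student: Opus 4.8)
The plan is to prove the bilinear estimate \eqref{eq:bootfaciledual} by reducing the Duhamel operator $B$ to a heat-flow estimate on the product $fg$, then applying the defining property of the $\Lb{\infty}{\eta}$-norm (Definition \ref{raah11}) together with Hölder in space and Sobolev embedding in the negative Besov scale. Recall from \eqref{eq:defB} that $B(f,g)$ is, componentwise, an integral of $R_jR_kR_l|\nabla|S(t-s)(fg)(s)\,ds$. Since the Riesz transforms are bounded Fourier multipliers on the homogeneous Besov spaces $\dot B^{s,q}_p$, and $|\nabla|S(t-s)$ gains one derivative at the cost of heat-smoothing, the whole point is to estimate the gain of regularity that $\int_0^t |\nabla| S(t-s)(\cdot)\,ds$ provides when measured in the $\mathcal{L}^\infty$-in-time Besov norm. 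I would invoke the smoothing estimate for this operator (the $\rho=\infty$ case of the maximal regularity used in Proposition 4.1 of \cite{gip3}, already cited for \eqref{eq:presquefiniter}), which maps a source term at scale $s-1$ into a solution at scale $s$, uniformly in time.

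**The key steps**, in order, would be as follows. First I would fix the target regularity: $B(f,g)\in \Lb{\infty}{\eta}$ means, by the shorthand, a bound in $\mathcal{L}^\infty_T(\dot B^{-(1-3/\eta),\infty}_\eta)$. Second, I would estimate the nonlinear source $fg$ in the appropriate negative Besov space. Here the product $fg$ of two functions in $L^\eta$ (obtained from $f\in L^r$, $g\in L^p$ by Hölder with $1/r+1/p=1/\eta$, hence the hypothesis $1/\eta\le 1/r+1/p$) lands in an $L^s$ space with $s\ge \eta/2$, and one embeds $L^s\hookrightarrow \dot B^{0,s}_s$, then uses Besov embedding into a space at scale $-(1-3/\eta)-1$ to match what the smoothing operator can absorb. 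The constraint $2/3<1/r+1/p$ is exactly what guarantees the relevant exponents stay in the admissible range $(1,+\infty)$ for the multipliers and for Sobolev embedding into the negative scale. Third, I would feed this source bound into the heat-flow smoothing estimate to recover the claimed bound on $B(f,g)$, symmetrizing over the two factors to produce both terms on the right-hand side of \eqref{eq:bootfaciledual}. Finally, for the endpoint case $p=3$, I would replace the space $B_3=\dot B^{0,\infty}_3$ by $L^3_x$ throughout; the only change is that the factor in $L^3$ is handled directly by Hölder rather than through a Besov embedding, yielding \eqref{eq:bootfacile}.

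**The main obstacle** I anticipate is a bookkeeping rather than a conceptual one: tracking the precise scaling relations among $r$, $p$, $\eta$ and the Besov regularity indices so that the source term $fg$ sits at exactly the regularity level $s-1$ that the operator $\int_0^t|\nabla|S(t-s)\,ds$ upgrades to level $s$. The delicate point is that one must work with the time-integrated $\mathcal{L}^\infty$ norm of Definition \ref{raah11} rather than the naive $L^\infty_t(\dot B^{s,q}_p)$ norm, because the smoothing estimate is genuinely a statement about the $\sigma$-integral \eqref{eq:defBesovHeat}; consequently the verification that the exponents $\eta$ and the condition $1/\eta\le 1/r+1/p$ are compatible with boundedness of the trilinear Riesz-type multiplier $R_jR_kR_l|\nabla|S$ on these spaces is where care is needed. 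Once the admissible-exponent inequalities are checked, the estimate follows from the cited smoothing bound and Hölder, with the $p=3$ endpoint being a routine specialization.
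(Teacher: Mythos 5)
Your overall architecture matches the paper's: the paper disposes of this lemma in one sentence, by combining ``standard product rules in Besov spaces'' with the smoothing properties of the operator $B$ (Proposition 4.1 of \cite{gip3}) --- that is, estimate the source $fg$ in a space at scaling $-2$, let $\int_0^t|\nabla|S(t-s)\,ds$ gain one derivative (the Riesz transforms being harmless on homogeneous Besov spaces), and symmetrize in $f$ and $g$. The genuine gap is in your second step, the estimate of $fg$ itself. You start from ``$f\in L^r$, $g\in L^p$'' and apply H\"older's inequality, but no Lebesgue-space information is available: the hypothesis is $f,g\in \Lb{\infty}{r}\cap \Lb{\infty}{p}$, i.e. bounds in the Besov spaces $\dot B^{3/r-1,\infty}_r$ and $\dot B^{3/p-1,\infty}_p$. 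At the exponent $p>3$ the regularity $3/p-1$ is \emph{negative}, so measured there the factors are in general genuine distributions, with no pointwise product theory; and interpolating the two pieces of the intersection only lands in spaces like $\dot B^{0,\infty}_3$, which strictly contains $L^3$ (the third index $\infty$ forbids the embedding $\dot B^{0,\infty}_3\hookrightarrow L^3$, cf. the embedding criterion $q\le r$ recalled after Definition \ref{d1}). Hence the chain ``H\"older gives $fg\in L^s$, then $L^s\hookrightarrow \dot B^{0,s}_s$'' has no starting point, and a product estimate of this pointwise kind cannot exist in this generality.

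What replaces it --- and what the paper means by product rules --- is Bony's decomposition $fg=T_fg+T_gf+R(f,g)$, with H\"older applied only on frequency-localized blocks; this yields $fg\in\mathcal{L}^\infty_t(\dot B^{3/\mu-2,\infty}_\mu)$ with $1/\mu=1/r+1/p$. This also corrects your reading of the hypotheses: the paraproducts $T_fg$, $T_gf$ are always fine here (the low-frequency factor has regularity strictly below $3/r$, resp. $3/p$), while the remainder $R(f,g)$ requires the \emph{sum of regularities} $(3/r-1)+(3/p-1)=3(1/r+1/p)-2$ to be positive --- this is exactly the hypothesis $1/r+1/p>2/3$, not an exponent-admissibility condition for multipliers as you suggest; the condition $1/r+1/p\le 1$ ensures $\mu\ge 1$; and $1/\eta\le 1/r+1/p$ enters only at the very end, through the Sobolev embedding $B_\mu\hookrightarrow B_\eta$ once $B$ has upgraded regularity $3/\mu-2$ to $3/\mu-1$, not as a H\"older relation. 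The same remark applies to your $p=3$ endpoint: the factor in $L^\infty_t(L^3_x)$ can indeed be exploited through its Lebesgue norm, but only blockwise inside the paraproducts, since the other factor is still known only in a Besov space with third index $\infty$ and hence lies in no Lebesgue space. With the product step done this way, the remainder of your outline (smoothing estimate for $B$, symmetrization) goes through as you describe.
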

The proof of the lemma follows directly from standard product rules in
Besov spaces and properties of the operator $B$ defined by
\eqref{eq:defB}, see e.g. Proposition 4.1 in \cite{gip3}.\cqfd

\medbreak
For the term $B(w_2,w_2)$, \eqref{eq:bootfaciledual} yields
\begin{equation}
  \label{eq:presquefiniavant}
  \|B(w_2,w_2)\|_{\Lb \infty 1} \lesssim \|w_2\|_{\Lb \infty p} \| w_2 \|_{\Lb
    \infty {p'}}\,,
\end{equation}
and by convexity of Besov norms,
$$
\|w_2\|_{\Lb \infty {p'}} \lesssim \|w_2\|^\lambda_{\Lb \infty
  {1}}\|w_2\|^{(1-\lambda)}_{\Lb \infty {p}}, \,\,\text{ with }\,\, \lambda=\frac{p-2}{p-1}\,\cdotp
$$
Therefore,
\begin{equation}
  \label{eq:presquefini}
  \|B(w_2,w_2)\|_{\Lb \infty 1} \lesssim K^{2-\lambda} \| w_2 \|_{\Lb
    \infty {1}}^\lambda\quad\hbox{with}\quad K = \sup_{t\in [0,T[} \|u(\cdot,t)\|_{B_p}\,.
\end{equation}
The crossterm is handled in a similar way: convexity of norms yields again
$$
\|w_2\|_{\Lb \infty {3/2}} \lesssim \|w_2\|^\eta_{\Lb \infty
  {1}}\|w_2\|^{(1-\eta)}_{\Lb \infty {p}}, \,\,\text{ with }\,\, \eta=\frac{2p-3}{3(p-1)}\,\virgp
$$
and by \eqref{eq:bootfacile}
\begin{equation}
\label{eq:presquefinibis}
  \|B(u_L,w_2)\|_{\Lb \infty 1} \lesssim \|u_0\|_{L^3}
K^{1-\eta} \| w_2\|_{\Lb \infty 1}^\eta + \|\nabla u_0\|_{L^{\frac 32}}
K^{1-\theta} \|w_2\|^\theta_{\Lb \infty 1}.
\end{equation}

Gathering \eqref{eq:presquefiniter}, \eqref{eq:presquefinibis} and~\eqref{eq:presquefini} and
using convexity, we obtain the desired control of $w_2$ in~$\Lb \infty
1$, which ends the proof of Lemma \ref{cas1}.\cqfd

\medbreak
In order to lower the regularity requirement on $u_0$, we need to deal with the crossterm in a different
way: in fact, the part of $B(u_L,w_2)$ which carries high frequencies
of $u_L$ has no reason to be any better than
$B_{3/(2(1-\varepsilon))}$. Hence, we seek first such an a priori
estimate for~$w_2$, and then bootstrap
this intermediate estimate to a suitable estimate in $B_{1/(1-\varepsilon)}$ for the
next term in the expansion:
\begin{lemma}
  \label{cas2}
 Under  the hypothesis of Theorem\refer{th2}, Proposition \ref{keyprop} holds with 
 $$
 v=u_L+B(u_L,u_L)+2B(u_L,w_2)\quad\hbox{and}\quad w=B(w_2,w_2).
 $$
\end{lemma}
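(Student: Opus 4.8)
The plan is to establish the decomposition $u=v+w$ with $v=u_L+B(u_L,u_L)+2B(u_L,w_2)$ and $w=B(w_2,w_2)$ by proving two things: first an intermediate a priori bound for $w_2$ in a negative Besov space of scale $-1$ with integrability index $3/(2(1-\varepsilon))$, and then using that intermediate bound to control the "last" term $B(w_2,w_2)$ in the sharper space $B_{1/(1-\varepsilon)}$. The key difference from Lemma \ref{cas1} is that we no longer have $\nabla u_L\in C_t(L^{3/2})$, so the term $B(u_L,u_L)$ and the high-frequency part of the crossterm $B(u_L,w_2)$ cannot be pushed all the way into $\Lb\infty 1$; instead they are absorbed into $v$, which only needs the weaker bounds \eqref{eq:splitv}.

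First I would verify the bound \eqref{eq:splitv} for this larger $v$. Since $u_L\in C_t(L^3_x)\cap L^\infty_t B_p$ with bounds $\|u_0\|_{L^3}$ and $2M$, and since $B$ is bounded, the bilinear terms $B(u_L,u_L)$ and $B(u_L,w_2)$ will land in $L^\infty_t(L^3)\cap L^\infty_t(\dot B^{3/p-1,q}_p)$ once one has an a priori bound on $w_2$ in those spaces; the latter follows from \eqref{eq:apriori2bis} together with the fixed-point/contraction structure, so \eqref{eq:splitv} reduces to applying Lemma \ref{productlemma} (in its $p=3$ form \eqref{eq:bootfacile} and its general form \eqref{eq:bootfaciledual}) with suitable exponents. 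This part I expect to be routine bookkeeping with product rules.

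The substance is in \eqref{eq:splitw}, i.e. controlling $w=B(w_2,w_2)$ in $B_{1/(1-\varepsilon)}$. I would proceed in two stages. The \emph{first} stage is a self-improvement for $w_2$ itself: starting from the equation \eqref{eq:w2} for $w_2$ and the weak bound $K=\sup_t\|u\|_{B_p}\lesssim M$, I would use Lemma \ref{productlemma} to close a bootstrap for $w_2$ in the intermediate space $B_{3/(2(1-\varepsilon))}$ (scale $-1$, integrability index $3/(2(1-\varepsilon))$). The crossterm $B(u_L,w_2)$ is the delicate one: as the text notes, its high-frequency-in-$u_L$ part is only as good as $B_{3/(2(1-\varepsilon))}$, which is exactly why we target that space rather than $B_1$. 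Convexity of Besov norms interpolating between this intermediate index and the $B_p$ bound, followed by a Young-type absorption of the highest power of $\|w_2\|$ into the left side, should close the estimate. The \emph{second} stage feeds this intermediate bound on $w_2$ back into $B(w_2,w_2)$ via \eqref{eq:bootfaciledual} with $r=3/(2(1-\varepsilon))$ and the $B_p$ bound, yielding $B(w_2,w_2)\in B_{1/(1-\varepsilon)}$ after checking that the resulting target index $\eta$ matches $1/(1-\varepsilon)$ and that the admissibility conditions $2/3<1/r+1/p\le 1$, $1/\eta\le 1/r+1/p$ of Lemma \ref{productlemma} are satisfied for $\varepsilon$ small.

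The main obstacle I anticipate is the bookkeeping of exponents needed to \emph{close} the first-stage bootstrap for $w_2$: one must verify that the product estimate \eqref{eq:bootfaciledual} applied to $B(w_2,w_2)$ and $B(u_L,w_2)$ produces, after interpolation, a power of $\|w_2\|$ in the intermediate space strictly less than $1$ (so the term is absorbable) while the conditions on $(r,p,\eta)$ in Lemma \ref{productlemma} remain compatible with the constraint $q<2p'$ and with $p>3$. Tracking the dependence on $\varepsilon$ and ensuring all admissibility windows stay nonempty as $\varepsilon\to 0$ is where the real care is required; everything else is an application of the product rule and convexity exactly as in the proof of Lemma \ref{cas1}.
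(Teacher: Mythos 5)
Your two-stage plan for the bound \eqref{eq:splitw} on $w=B(w_2,w_2)$ is essentially the paper's own proof: an intermediate bootstrap for $w_2$ in $B_{3/(2(1-\varepsilon))}$ (that space, rather than $B_1$, being forced by the high frequencies of $u_L$ in the crossterm), closed by convexity and absorption, then one more application of the product estimate with $r=3/(2(1-\varepsilon))$, which indeed gives $\eta=1/(1-\varepsilon)$. One caveat even here: for the crossterm $B(u_L,w_2)$ you cannot literally invoke Lemma \ref{productlemma}. Its version \eqref{eq:bootfacile} has a symmetric right-hand side containing $\|w_2\|_{L^\infty_t(L^3_x)}$, which is not available, and \eqref{eq:bootfaciledual} requires $u_L\in \Lb \infty r$ with $r<3$, which fails for data merely in $L^3\cap \dot B^{3/p-1,q}_p$ (Besov embeddings go the wrong way). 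What is needed, and what the paper proves separately as Lemma \ref{productweakL}, is the \emph{one-sided} estimate $\| B(u_L,f)\|_{\Lb \infty {3/(2(1-\varepsilon))}} \lesssim \| f\|_{\Lb \infty {3/(1+\varepsilon)}} \|u_0\|_{L^3}$, where only the $f$ slot carries a scale $-1$ Besov norm.

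The genuine gap is in your treatment of \eqref{eq:splitv}, which you dismiss as routine bookkeeping. First, the reduction is circular: you propose to control $B(u_L,w_2)$ in $L^\infty_t(L^3)$ after securing an a priori bound on $w_2$ in $L^\infty([0,T[;L^3)$ from ``the fixed-point/contraction structure''. But a uniform $L^\infty([0,T[;L^3)$ bound on $w_2$ (hence on $u$) is precisely what Proposition \ref{keyprop} is designed to produce, and what permits invoking Escauriaza--Seregin--\v Sver\'ak; local Cauchy theory gives no control uniform as $t\to T$, otherwise there would be nothing to prove. Second, ``since $B$ is bounded'' is false in the topology you need: $B$ is not bounded from $L^\infty_t(L^3)\times L^\infty_t(L^3)$ to $L^\infty_t(L^3)$ (this is the classical obstruction in the $L^3$ theory), so even granting the $w_2$ bound the conclusion would not follow. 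Third, Lemma \ref{productlemma} cannot yield \eqref{eq:splitv} for any choice of exponents, since its output space is always of the form $\Lb \infty \eta$, i.e. scale $-1$ with third index $\infty$, never $L^3$ nor $\dot B^{3/p-1,q}_p$ with finite $q$. The missing idea is the paper's Lemma \ref{crosstermL3B}: the crossterm is estimated by exploiting the parabolic smoothing of the \emph{linear} flow, namely $u_L\in \mathcal{L}^{2p'}_t(\dot B^{1/p',3}_3)$ for \eqref{eq:crosstermL3} and $u_L\in \mathcal{L}^{1}_t(\dot B^{3/p+1,q}_p)\cap \mathcal{L}^{\infty}_t(\dot B^{3/p-1,q}_p)$ for \eqref{eq:crosstermBp}. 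It is this extra time integrability on the $u_L$ factor, not any boundedness of $B$ on $L^\infty_t$ spaces, that makes the Duhamel integral converge in $L^3\cap \dot B^{3/p-1,q}_p$; without it, the $v$ part of the lemma does not close.
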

Remark that, by standard heat estimates, the bound \eqref{eq:splitv}
holds for $B(u_L,u_L)$ as it already does for $u_L$. We now use the following lemma to take care of the crossterm:
\begin{lemma}\label{crosstermL3B}
 Let $3<p<+\infty$,  $f\in \Lb \infty p$, then
  \begin{equation}
    \label{eq:crosstermL3}
   \| B(u_L,f)\|_{L^\infty_t(L^3)} \lesssim \| f\|_{\Lb \infty p} \|u_0
   \|_{L^3}\,,
  \end{equation}
and
  \begin{equation}
    \label{eq:crosstermBp}
   \| B(u_L,f)\|_{L^\infty_t(\dot B^{-(1-3/p),q}_p)} \lesssim \| f\|_{\Lb \infty p} \|u_0
   \|_{\dot B^{-(1-3/p),q}_p}\,.
  \end{equation}
\end{lemma}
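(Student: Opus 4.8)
The plan is to exploit two sources of parabolic smoothing: the one carried by the operator $B$ itself, and the one carried by the free evolution $u_L=S(t)u_0$. Recall from \eqref{eq:defB} that $B(u_L,f)=-\int_0^t\mathbb{P}S(t-s)\nabla\cdot(u_L\otimes f)(s)\,ds$, and that $\int_0^t\nabla S(t-s)\,ds$ gains one net derivative (two from the time integral, one spent on the divergence). Since $u_0\in L^3$ (resp. $u_0\in\dot B^{-(1-3/p),q}_p$) and $f\in\Lb\infty p=L^\infty_t\dot B^{-(1-3/p),\infty}_p$ all sit at the critical scaling $-1$, the product $u_L\otimes f$ sits at scaling $-2$, exactly one derivative below the targets. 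So it suffices to place $u_L\otimes f$ in $\dot B^{-1}_3$ (resp. $\dot B^{-(2-3/p),q}_p$) and then recover the derivative through $B$. This is the scheme of Proposition 4.1 in \cite{gip3}, which I intend to quote for the mapping properties of $B$.

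First I would record the heat bounds on $u_L$. From $u_0\in L^3$ one gets $u_L\in\Lb\rho3$ with norm $\lesssim\|u_0\|_{L^3}$, and from $u_0\in\dot B^{-(1-3/p),q}_p$ one gets $u_L$ in the matching Chemin--Lerner space $\mathcal{L}^\rho_T(\dot B^{s_\rho,q}_p)$ with norm $\lesssim\|u_0\|_{\dot B^{-(1-3/p),q}_p}$, for a suitable range of $\rho$. The point of using a finite $\rho$ is that $u_L$ then carries strictly positive averaged regularity $s_\rho=-1+3/p+2/\rho>0$; this positive regularity on the linear part is what makes the otherwise ill-defined product with the negative-regularity factor $f$ meaningful.

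Next I would split $u_L\otimes f$ by Bony's decomposition into $T_{u_L}f+T_{f}u_L+R(u_L,f)$ and estimate each piece in the relevant mixed space-time norm. The two paraproducts are the easy terms: in $T_{u_L}f$ the low frequencies of $u_L$ are summed and yield an integrability gain by H\"older while $f$ keeps its negative regularity, and symmetrically for $T_f u_L$, where moreover $u_L$ transports the $q$ summability inherited from $u_0$. Feeding these into the mapping properties of $B$ recovers the missing derivative, and for \eqref{eq:crosstermL3} one concludes via $\dot B^{0,2}_3\hookrightarrow L^3$.

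The main obstacle is the high-high remainder $R(u_L,f)$. In the pure $L^\infty_t$ scale both factors have regularities summing to $-(1-3/p)<0$, so the classical remainder rule fails and the defining dyadic sum loses summability; this is precisely why one cannot simply rerun the argument of Lemma \ref{cas1}. The resolution is to spend the parabolic smoothing of $u_L$: choosing $\rho$ finite with $s_\rho>1-3/p$ makes the total regularity of $R(u_L,f)$ positive, so the sum converges, and the derivative gain of $B$ returns the output to the critical scale. The same interplay between distinct time-integrability exponents is what upgrades the last Besov index back to a finite value — index $q$ in \eqref{eq:crosstermBp} and index $\leq 2$ for the embedding into $L^3$ in \eqref{eq:crosstermL3} — since the high-frequency factor $f$ only carries index $\infty$, the finer summability must be produced by the summability gain of $B$. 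The delicate part is then purely the bookkeeping of the three indices, namely regularity, $1/\rho$, and Besov summability, so that the output lands exactly in $L^\infty_t(L^3)$ and in $L^\infty_t(\dot B^{-(1-3/p),q}_p)$ respectively.
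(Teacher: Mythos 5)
Your overall scheme is the same as the paper's: trade time integrability of the free flow for positive spatial regularity via Chemin--Lerner heat estimates (the paper takes $u_L \in \mathcal{L}^{2p'}_t(\dot B^{1/p',3}_3)$ for \eqref{eq:crosstermL3} and $u_L \in \mathcal{L}^{1}_t (\dot B^{3/p+1,q}_p)\cap \mathcal{L}^{\infty}_t (\dot B^{3/p-1,q}_p)$ for \eqref{eq:crosstermBp}), then run Bony's decomposition, product rules and the mapping properties of $B$ from Proposition 4.1 of \cite{gip3}. Your diagnosis that the remainder $R(u_L,f)$ is what forces a finite $\rho$ with $s_\rho>1-3/p$ is exactly the role played by the exponent $1/p'>1-3/p$ in the paper's choice.

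There is, however, a genuine gap in the summability bookkeeping, which you yourself flag as the delicate point. The operator $B$ has \emph{no} summability gain: block by block, $\|\Delta_j B(h)\|_{L^\infty_t L^p} \lesssim 2^{j}\int_0^t e^{-c2^{2j}(t-s)}\|\Delta_j h(s)\|_{p}\,ds$, and whether one uses the $L^1_t$ or the $L^\infty_t$ (or any $L^\rho_t$) bound on the right-hand side, time integration produces factors $2^{-2j}$ or $2^{-2j/\rho'}$ --- i.e.\ derivatives --- while the third Besov index of the output is exactly that of the input. Consequently your route to \eqref{eq:crosstermL3} cannot close: $u_0\in L^3$ yields only $(\|\Delta_j u_0\|_{3})_j\in\ell^3$ (and this is sharp), $f$ carries $\ell^\infty$, so every term of $B(u_L,f)$ is at best $\ell^3$-summable; since $\dot B^{0,3}_3\not\hookrightarrow L^3$, the embedding $\dot B^{0,2}_3\hookrightarrow L^3$ you invoke is out of reach. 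The correct mechanism, encoded in the paper's choice of spaces, is twofold. First, the finite index is always inherited from $u_L$, never from $B$: in the one dangerous term $T_{u_L}f$, whose high frequencies carry $f$'s $\ell^\infty$, the low-frequency sums $\sum_{k\leq j-2}$ over blocks of $u_L$ come with geometric weights (this is where the sign conditions on the regularities enter) and hence transfer the $\ell^3$, respectively $\ell^q$, coefficients of $u_L$ to the output by Young's inequality $\ell^1\ast\ell^q\subset\ell^q$; this is why the paper insists on third index $3$, respectively $q$, in its heat estimates on $u_L$. Second, the conclusion of \eqref{eq:crosstermL3} does not pass through regularity zero at Lebesgue exponent $3$: one lands in a space of \emph{positive} regularity at the H\"older exponent $1/r=1/3+1/p<1/3$, namely $\mathcal{L}^\infty_t(\dot B^{3/p,3}_{r})$, and uses the embedding recalled in the paper's introduction, $\dot B^{s,q}_p\hookrightarrow L^r_x$ for $s-3/p=-3/r$ with $s>0$ and $q\leq r$, for which $\ell^3$ summability suffices precisely because the regularity is strictly positive. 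Your $\ell^2$ target is thus both unattainable and unnecessary.
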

The proof of Lemma \ref{crosstermL3B} follows once again from product
rules and properties of $B$ (Proposition 4.1, \cite{gip3}), provided one uses heat estimates on
$u_L$: for \eqref{eq:crosstermL3}, one uses $u_L \in \mathcal{L}^{2p'}_t
(\dot B^{1/p',3}_3) $, while for \eqref{eq:crosstermBp} one uses  $u_L
\in \mathcal{L}^{1}_t (\dot B^{3/p+1,q}_p)\cap \mathcal{L}^{\infty}_t (\dot B^{3/p-1,q}_p) $.\cqfd

\medbreak
 We now apply the lemma to
$f=w_2$ (which was already proved to be in $\Lb \infty p$) and finally
get bound \eqref{eq:splitv} on our $v=u_L+B(u_L,u_L)+2B(u_L,w_2)$. We
now turn to the bound on $w$, with a new product lemma:
\begin{lemma}\label{productweakL}
 Let $\varepsilon>0$ be small, $f\in \Lb \infty {3/(1+\varepsilon)}$, then
  \begin{equation}
    \label{eq:crosstermB32}
    \| B(u_L,f)\|_{\Lb \infty {3/(2(1-\varepsilon))} } \lesssim \| f\|_{\Lb \infty {3/(1+\varepsilon)}} \|u_0
   \|_{L^3}\,.
  \end{equation}
\end{lemma}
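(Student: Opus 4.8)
The plan is to prove Lemma \ref{productweakL} by the same mechanism as Lemma \ref{crosstermL3B}: combine standard Besov product rules with the smoothing properties of $B$ recorded in Proposition 4.1 of \cite{gip3}, fed by heat estimates on $u_L$ coming from $u_0\in L^3$. Unfolding the shorthand, the target space is $\Lb \infty {3/(2(1-\varepsilon))}=\mathcal{L}^\infty_t(\dot B^{1-2\varepsilon,\infty}_{3/(2(1-\varepsilon))})$ and the datum lives in $\Lb \infty {3/(1+\varepsilon)}=\mathcal{L}^\infty_t(\dot B^{\varepsilon,\infty}_{3/(1+\varepsilon)})$; both spaces sit at scale $-1$, as does everything below, which is what will make the index bookkeeping close.

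First I would record the relevant heat estimate: since $u_0\in L^3$, scale-invariant parabolic smoothing gives $u_L\in \mathcal{L}^\rho_t(\dot B^{2/\rho,\infty}_3)$ with norm $\lesssim\|u_0\|_{L^3}$, for any finite $\rho>2$ (so that the regularity $2/\rho$ is strictly positive yet still below $1=3/3$). I would then apply the space product rule $\dot B^{2/\rho}_3\cdot \dot B^{\varepsilon}_{3/(1+\varepsilon)}\hookrightarrow \dot B^{2/\rho+\varepsilon}_{\pi}$ with $1/\pi=1/3+(1+\varepsilon)/3$, together with H\"older in time ($\mathcal{L}^\rho_t\cdot \mathcal{L}^\infty_t\subset \mathcal{L}^\rho_t$), to obtain $u_Lf\in \mathcal{L}^\rho_t(\dot B^{2/\rho+\varepsilon}_{\pi})$; the regularities may be added here because $2/\rho+\varepsilon>0$ and each exponent stays below the corresponding $3/p$ threshold. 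Feeding this into Proposition 4.1 of \cite{gip3}, the operator $B$ provides two derivatives of parabolic smoothing, loses one to the $|\nabla|$ factor, and trades integrability from $\rho$ to $\infty$ at the cost of $2/\rho$ derivatives (the Riesz transforms being harmless since $1<\pi<3/2$). The net outcome is $B(u_L,f)\in \mathcal{L}^\infty_t(\dot B^{1+\varepsilon}_{\pi})$, the input regularity $2/\rho$ cancelling exactly against the time-smoothing gain. A final Sobolev embedding $\dot B^{1+\varepsilon}_{\pi}\hookrightarrow \dot B^{1-2\varepsilon,\infty}_{3/(2(1-\varepsilon))}$, legitimate because both sides lie at scale $-1$ and $1+\varepsilon\geq 1-2\varepsilon$, then delivers \eqref{eq:crosstermB32}, the $\|u_0\|_{L^3}$ factor being supplied by the heat estimate and the $\|f\|$ factor by the product.

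The delicate point, and the reason the target cannot be improved to $L^3$ as in Lemma \ref{crosstermL3B}, is precisely the one flagged before Lemma \ref{cas2}: the piece of the product carrying the high frequencies of $u_L$ (essentially the paraproduct $\bony{f}{u_L}$ together with part of the remainder) inherits only the regularity $0$ of $u_L$ in $L^3$, so that after the single-derivative gain of $B$ it lands no better than $\dot B^{1-2\varepsilon,\infty}_{3/(2(1-\varepsilon))}$. I would therefore split $u_Lf$ by Bony's decomposition and estimate the three pieces separately, the high-$u_L$-frequency term being the one that dictates the choice of target space. The main technical care will be to keep the product-rule and $B$-smoothing indices consistent at scale $-1$ across the three pieces, and to handle the endpoint third summability index $\infty$ in the product rule, where one uses that the companion factor carries strictly positive regularity.
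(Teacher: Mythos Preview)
Your closing Bony-decomposition plan is correct and proves the lemma, but the preliminary route through $u_L\in\mathcal L^\rho_t(\dot B^{2/\rho,\infty}_3)$ does not work as written: the product rule $\dot B^{2/\rho,\infty}_3\cdot \dot B^{\varepsilon,\infty}_{3/(1+\varepsilon)}\hookrightarrow \dot B^{2/\rho+\varepsilon,\infty}_{3/(2+\varepsilon)}$ is false when both regularities are strictly positive. The obstruction is precisely the paraproducts: with $s_{u_L}=2/\rho>0$ the low-frequency partial sum $\|S_{j-1}u_L\|_{L^3}$ diverges, so one is forced to Bernstein up to $L^\infty$, after which $\pi_{u_L}f$ sits at Lebesgue index $3/(1+\varepsilon)$ rather than $3/(2+\varepsilon)$, and the Sobolev embedding back down goes the wrong way. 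The symmetric issue hits $\pi_f u_L$. Thus the $\mathcal L^\rho_t$ detour should be dropped; go straight to the Bony argument using $u_L\in L^\infty_t(L^3)$ as you in fact do in your last paragraph, where your identification of the high-$u_L$-frequency piece as the limiting one is correct.

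The paper's proof is more economical and avoids any time smoothing: it never leaves $\mathcal L^\infty_t$ and places $u_L$ directly in $\Lh\infty{-3\varepsilon}$, which follows from $u_0\in L^3$ via $L^3\hookrightarrow\dot B^{0,\infty}_3\hookrightarrow\dot B^{-3\varepsilon,\infty}_{3/(1-3\varepsilon)}$. With $u_L$ at \emph{negative} regularity, the paraproduct $\pi_{u_L}f$ falls under the standard estimate $T_ab\in\dot B^{s_a+s_b,\infty}_\pi$ valid for $s_a<0$, landing directly at the target Lebesgue index $3/(2(1-\varepsilon))$; the bilinear smoothing of $B$ (Proposition~4.1 of \cite{gip3}) then handles all pieces in one stroke. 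Your Bony argument and the paper's agree on the limiting piece $\pi_f u_L$, but the paper's negative-regularity choice for $u_L$ renders $\pi_{u_L}f$ immediate rather than requiring a separate $L^3$ input.
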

As before, the lemma follows from product rules in Besov spaces,
actually requiring only~$u_L \in \Lh \infty {-3\e}$.\cqfd

\medbreak
Going back to \eqref{eq:w2}, we have $B(u_L,u_L)\in \Lh \infty 1$
from standard estimates (or suitable tweaking of the previous lemma,
or \cite{CP}). From Lemmata \ref{productlemma} and \ref{productweakL},
\begin{equation*}
  \|w_2\|_{\Lb \infty {3/(2(1-\varepsilon))} } \lesssim C(u_0)+ \| w_2\|_{\Lb \infty {3/(1+\varepsilon)}} \|u_0
   \|_{L^3}+\|w_2\|^2_{\Lb \infty {3/(1+\varepsilon)} }
\end{equation*}
and by convexity of Besov norms, 
\begin{equation*}
  \|w_2\|_{B_{3/(1+\varepsilon)}} \leq \|w_2\|^\lambda_{B_{3/(2(1-\varepsilon))}}\|w_2\|^{1-\lambda}_{B_{p}}\,,
\end{equation*}
where $\lambda=((1+\epsilon)p-3)/(2(1-\epsilon)p-3)
<1/2$, provided $\varepsilon< 3/(4p)$. Hence, combining the three previous inequalities and convexity,
we obtain
$$
\|w_2\|_{\Lb \infty {3/(2(1-\varepsilon))} }\leq C(u_0,M).
$$
We can now proceed with $w=B(w_2,w_2)$: another application of Lemma \ref{productlemma}
yields
$$
\|B(w_2,w_2)\|_{\Lb \infty {1/(1-\varepsilon)} } \lesssim\|w_2\|_{\Lb
  \infty {3/(2(1-\varepsilon))} }\|w_2\|_{\Lb \infty {3/(1-\varepsilon)}
}\,,
$$
which concludes the proof of Lemma \ref{cas2} and therefore the proof
of Proposition \ref{keyprop}.\cqfd

\medbreak
For the remaining part of this section we prove Theorem
\ref{th1}. Recall we then have $3<p<6$ and the solution $u$ satisfies a
priori bound \eqref{eq:apriori}.

In order to compensate for the lack of positive regularity on the
linear flow $u_L$, we need one further iteration: set $w_{2}=B(u_{L},u_{L})+w_{3}$, then
\begin{multline}
\label{eq:defw3}
  w_{3}=2B(u_{L},B(u_{L},u_{L}))+B(B(u_{L},u_{L}),B(u_{L},u_{L}))\\
{}+2B(u_{L},w_{3})+2B(B(u_{L},u_{L}),w_{3})+B(w_{3},w_{3}).
\end{multline}
We start with terms involving only the linear flow: standard heat
estimates yield (see e.g. Proposition 4.1 of\ccite{gip3})
$$
B(u_{L},u_{L}) \in \Lb \infty {p/2}\cap \Lb 1 {p/2}\,;
$$
then, by standard product rules, with $p<q<6$, where $\kappa=3/p-3/q>0$
is understood to be small, 
$$
B(u_{L},B(u_{L},u_{L})) \in \Lb \infty{3q/(q+3)}\cap \Lb 1{3q/(q+3)},
$$
as the worst case is when low frequencies are on $B(u_{L},u_{L})\in
\Lh \infty {-\kappa}$. Notice
that for $q=6$ we would get $\Lh \infty {1/2}=\Lb \infty 2$. The
quadrilinear term is dealt with in a similar way.
\begin{remark}
\label{rem-it}
This may be iterated again, of course, but we will not do so
here. Our restriction on $p$ comes from the balance between regularity
$3/q$ (on the trilinear term in $u_L$) and $3/p-1$ (our a priori
bound), which requires $3/q+3/p-1>0$.
\end{remark}
Next, we prove the following proposition, which is a slight improvement over
the statement from Theorem \ref{th1}.
\begin{prop}
\label{th1m}
Assume \eqref{eq:apriori} on $u$ for $3<p<6$, then, for $p<q<6$,
$$
\|w_{3}\|_{\Lb \infty {3q/(q+3)}} \lesssim C(\|u\|_{\Lb \infty{p}}).
$$
\end{prop}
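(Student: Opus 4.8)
The plan is to read \eqref{eq:defw3} as a bootstrap identity for $w_3$ in the target space $X:=\Lb \infty {3q/(q+3)}=\mathcal{L}^\infty_t(\dot B^{3/q,\infty}_{3q/(q+3)})$, whose regularity index $3/q$ is \emph{positive}, in contrast with the negative index $3/p-1$ of the a priori bound \eqref{eq:apriori}; since both spaces live at scale $-1$, the content of the proposition is precisely this regularity gain. As preliminaries I would record that the hypothesis together with heat estimates gives $u_L\in\Lb \rho p$ for every $1\le\rho\le\infty$ with norm $\lesssim M$, and that consequently $w_2=u-u_L$ and then $w_3=w_2-B(u_L,u_L)$ both lie a priori in $\Lb \infty p$ with a bound $C(M)$ (using the embedding $\Lb \infty {p/2}\hookrightarrow\Lb \infty p$ for the quadratic term). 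Local regularity theory makes $\|w_3\|_X$ finite on each $[0,T']$, $T'<T$, which legitimizes the bootstrap; the aim is a bound uniform in $T'$.

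The first two terms of \eqref{eq:defw3} involve only $u_L$ and were already placed in $X\cap\Lb 1 {3q/(q+3)}$ by the heat estimates stated before the proposition, so they form a fixed source of size $C(M)$. It remains to estimate the three terms carrying $w_3$, for which I would invoke bilinear estimates of the type of Lemma \ref{productlemma} (Proposition 4.1 of \cite{gip3}) combined with convexity of Besov norms. The decisive device, exactly as in the proof of Proposition \ref{keyprop}, is to make the target norm $\|w_3\|_X$ reappear on the right-hand side with a power strictly less than $1$: for the quadratic term $B(w_3,w_3)$ one keeps one factor in $\Lb \infty p$ (controlled by $C(M)$) and interpolates the other between $\Lb \infty p$ and $X$, yielding $\|B(w_3,w_3)\|_X\lesssim C(M)\,\|w_3\|_X^{\theta}$ with $\theta<1$; the term $B(B(u_L,u_L),w_3)$, in which $B(u_L,u_L)\in\Lb \infty {p/2}$ carries good regularity, is handled the same way and is the easiest.

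The genuinely delicate term is $2B(u_L,w_3)$, where $u_L$ only enjoys the weak, negative-regularity a priori bound. Here I would not try to gain through $u_L$ but instead exploit its full suite of space-time heat norms $u_L\in\Lb \rho p$ (as in Lemmata \ref{crosstermL3B} and \ref{productweakL}), pairing them against $w_3$. The product is licit precisely because the sum of the regularities $(3/p-1)+3/q$ is positive, and this is where $p<6$ and $q<6$ enter: each forces $3/p>1/2$ and $3/q>1/2$, hence $3/p+3/q>1$, which is the constraint recorded in Remark \ref{rem-it}. After applying $B$ (one derivative lost, amply compensated by heat smoothing) one lands in $X$, and convexity again gives $\|B(u_L,w_3)\|_X\lesssim C(M)\,\|w_3\|_X^{\mu}\,\|w_3\|_{\Lb \infty p}^{1-\mu}$ with $\mu<1$. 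I expect this term, which controls the high-frequency part of $u_L$ against $w_3$ inside the positive-regularity space $X$, to be the main obstacle and the true source of the $p<6$ restriction.

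Finally I would gather the source bound with the three $w_3$-estimates into an inequality of the form $\|w_3\|_X\le C(M)+C(M)\bigl(\|w_3\|_X^{\theta}+\|w_3\|_X^{\mu}\bigr)$ with $\theta,\mu<1$. Since $\|w_3\|_X$ is a priori finite on each $[0,T']$, Young's inequality absorbs the sublinear-power terms and yields $\|w_3\|_X\le C(M)$ uniformly in $T'<T$, which is the assertion of the proposition.
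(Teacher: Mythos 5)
Your proposal follows essentially the same route as the paper's proof: the same expansion \eqref{eq:defw3}, the pure-$u_L$ terms taken as a fixed $C(M)$ source from the heat estimates stated before the proposition, a bilinear (product-rule) estimate placing $B(v,w_3)$ in $\Lh \infty {3/q}$ for $v\in\Lb \infty p$ under the constraint $3/p+3/q>1$, convexity of Besov norms to make the target norm reappear with a power strictly less than $1$, and absorption legitimized by the a priori finiteness of $w_3$ in positive-regularity spaces from local theory. The only cosmetic difference is that the paper treats all three $w_3$-terms uniformly as $B(v,w_3)$ with $v\in\{u_L,\,B(u_L,u_L),\,w_3\}\subset\Lb \infty p$ via a single lemma, rather than singling out the cross term $B(u_L,w_3)$ for a separate treatment through space-time heat norms of $u_L$.
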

We already dealt
with terms involving only $u_{L}$ in \eqref{eq:defw3}. All $B(\cdot,\cdot)$ terms
involving $w_{3}$ itself are like $B(v,w_{3})$, where $v\in \Lb\infty p$
and $\|v\|_{\Lb \infty p} \lesssim M=\|u\|_{\Lb \infty p}$. 
\begin{lemma}
Let $r$ be such that $3/r=(q+3)/q-\e$ and
$\e<6/q-1$. Let $v$ be in~$\Lb \infty p$ and~$w_3$ in~$\Lb \infty r$, then 
\begin{equation}
  \label{eq:finipdb}
  \| B(v,w_3)\|_{\Lh \infty {3/q} } \lesssim \|v\|_{\Lb \infty p}
  \|w_3\|_{\Lb \infty r}.
\end{equation}
\end{lemma}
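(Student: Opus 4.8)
The plan is to write $B(v,w_3)=T(v\,w_3)$, where $T=\int_0^t|\nabla|\,\mathbb{P}\,S(t-s)(\cdot)\,ds$ is the smoothing operator of \eqref{eq:defB}, and to treat the bilinear product and $T$ separately. The only feature of $T$ I need is that it gains exactly one spatial derivative, uniformly in time and at fixed integrability: the factor $|\nabla|$ costs one derivative while the time integration of the heat semigroup gains two, and at frequency $2^j$ one has $\int_0^t 2^j e^{-c(t-s)2^{2j}}\,ds\lesssim 2^{-j}$. Since $\mathbb{P}$ and the Riesz transforms are bounded on $\dot B^{\sigma,\infty}_P$ for $1<P<+\infty$, this shows that $T$ maps $\mathcal{L}^\infty_t\dot B^{\sigma,\infty}_P$ into $\mathcal{L}^\infty_t\dot B^{\sigma+1,\infty}_P$ for every $\sigma$. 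As every norm in \eqref{eq:finipdb} is $\mathcal{L}^\infty$ in time, no time integrability is consumed, and it suffices to show that the pointwise product $v(t)\,w_3(t)$ lies, uniformly in $t$, in a homogeneous Besov space embedding into $\dot B^{3/q-1,\infty}_{3q/(q+3)}$, and then to apply $T$ to reach $\Lh{\infty}{3/q}=\mathcal{L}^\infty_t\dot B^{3/q,\infty}_{3q/(q+3)}$.

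For the product I would use Bony's decomposition $v\,w_3=\pi_v w_3+\pi_{w_3}v+R(v,w_3)$. Note that $v$ has negative regularity $3/p-1<0$, whereas $w_3$ has regularity $3/r-1=3/q-\e$, which is positive under the hypothesis on $\e$. The low-$v$ paraproduct $\pi_v w_3$ then lands in $\dot B^{(3/p-1)+(3/q-\e),\infty}_{P_0}$ with $1/P_0=1/p+1/r$, and the same space receives the remainder $R(v,w_3)$, provided the total regularity is positive. For the low-$w_3$ paraproduct $\pi_{w_3}v$, redistributing integrability by Bernstein's inequality — legitimate since $p<q$ makes the low-frequency sum over $w_3$ converge — places it directly in $\dot B^{3/q-1,\infty}_{3q/(q+3)}$. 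In each case the scaling relation $3/r=(q+3)/q-\e$, which makes $w_3$ an object of scaling $-1$, forces the resulting Besov space to have scaling exponent $2$, so that a critical Sobolev embedding sends it into $\dot B^{3/q-1,\infty}_{3q/(q+3)}$. This step is precisely the content of the product rules behind Proposition~4.1 of \cite{gip3}.

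The genuine point to check, and where the quantitative hypotheses on $\e$ are consumed, is the high--high interaction $R(v,w_3)$: the remainder is controlled only when the sum of the two regularities is strictly positive, i.e. $(3/p-1)+(3/q-\e)>0$, while the final critical embeddings are admissible only when $\e<3/p$. The stated bound $\e<6/q-1$ is a convenient sufficient condition for all of these simultaneously: since $q>3$ it gives $3/q-\e>0$ (so $w_3$ is indeed of positive regularity and its paraproduct behaves well); since $p<q$ one has $6/q-1\le 3/p+3/q-1$, which yields positivity of the remainder's regularity sum; and $6/q-1<3/q<3/p$ secures the admissibility of the embeddings. Everything else — the heat-kernel bound for $T$, the Bernstein redistributions, and the boundedness of $\mathbb{P}$ and the Riesz transforms on the intermediate $\dot B^{\sigma,\infty}_P$ with $1<P<+\infty$ — is routine bookkeeping, so the only real obstacle is the positivity constraint imposed by the remainder term.
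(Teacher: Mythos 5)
Your proposal is correct and takes essentially the same route as the paper: the paper dispatches this lemma in one line as ``a direct consequence of product rules and properties of $B$'' (i.e.\ Proposition 4.1 of \cite{gip3}), and your argument is exactly that statement with the black boxes opened --- the one-derivative gain of the Duhamel operator in $\mathcal{L}^\infty_t$-based spaces via the frequency-localized heat kernel bound, combined with Bony paraproduct/remainder estimates and critical embeddings, with a correct accounting of why $\e<6/q-1$ (together with $3<p<q<6$) suffices for every step. There is nothing to correct.
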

The lemma is again a direct consequence of product rules and
properties of $B$.\cqfd

By convexity of Besov norms,
$$
\|w_{3}\|_{B_{r}} \lesssim \|w_{3}\|_{B^{3/q}}^{1-\eta} \|w_{3}\|^\eta_{B_{p}}
$$
with $\eta=\e/(1-\kappa)$, and
$$
\|B(v,w_{3})\|_{\Lh \infty {3/q}} \lesssim M^{1+\eta}
\|w_{3}\|_{L^\infty B^{3/q}}^{1-\eta}  \lesssim C(\eta)
M^{\frac{1+\eta} \eta}+\gamma(\eta) \|w_{3}\|_{\Lh \infty {3/q}}\,,
$$
where we may chose $\gamma\ll 1$. Summing estimates, we close on $w_{3}$,
$$
\|w_{3}\|_{\Lh \infty {3/ q}} \lesssim C(\delta, M)+\delta
\|w_{3}\|_{\Lh \infty{3/q}},
$$
with a small suitable $\delta$.
\begin{remark}
  Note that we assume that $u_{0}$ is actually in $\dot
  B^{3/p-1,q}_{p}$ with $q<+\infty$; then a local in time strong
  solution exists, and the a priori bound is valid as long as the
  strong solution exists, because $w_{3}$ is already known to be in
  $B^{3/p}$ as a byproduct of local existence theory. We are {\bf not}
  constructing $w_{3}$, merely improving a bound.
\end{remark}

\section {H\"older regularity in time}
\subsection{Scaled energy estimates}
Consider a local in time solution $u$ such that
$u_{0} \in \Be {-1/4} 4 4$. Assuming it exists past time $T$, one may
prove that $\sup_{0<t<T} t^{\frac 1 8} \|u(\cdot,t)\|_{L^4} <+\infty $;
from the Duhamel formula, one then obtains that $\sup_{0<t<T}
\|u-u_{L}\|_{L^2_{x}} \lesssim T^{1/4}$. The next proposition proves that
such a bound does not depend on the local Cauchy theory but only on
a suitable a priori bound:
\begin{proposition}
\label{scaledEnergy} Let~$u$ be a solution of~(NS). Then, recalling~$u= u_{L}
+w$, we have
$$
 \frac 1 {t^{\frac 1 2}} {\|w(t)\|^2_{L^2} } +\int_{0}^t  \frac 1 
{t'^{\frac 1 2}}  \Bigl( \|\nabla w(t')\|_{L^2} ^2 + \frac 1 {t'} \|w(t')\|_{L^2}^2\Bigl)
\,dt' \lesssim \|u_0\|^4_{\Be {-1/4} 4 4 } \exp \bigl (C
\|u_0\|^5_{\Be {-2/5} 5  5 }\bigr).
$$
\end{proposition}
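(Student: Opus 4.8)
The plan is to run a weighted energy estimate on $w=u-u_L$ with the scaling weight $t^{-1/2}$ and then close the argument by Gronwall. First I would record the equation for $w$: since $u=u_L+w$ and $w|_{t=0}=0$,
\[
\partial_t w-\Delta w=-\mathbb{P}\nabla\cdot(u\otimes u),\qquad u\otimes u=u_L\otimes u_L+u_L\otimes w+w\otimes u_L+w\otimes w.
\]
Taking the $L^2$ inner product with $t'^{-1/2}w$ and integrating in space and time over $[\delta,t]$, I use that $w$ is divergence free, so that $\mathbb{P}$ disappears against $w$ and the cubic term $\langle\mathbb{P}\nabla\cdot(w\otimes w),w\rangle$ vanishes. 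The dissipation produces $\int_0^t t'^{-1/2}\|\nabla w\|_{L^2}^2\,dt'$, while integrating by parts in time against the \emph{decreasing} weight $t^{-1/2}$ yields the two favourable terms: the boundary contribution $\tfrac12 t^{-1/2}\|w(t)\|_{L^2}^2$ and, since $\partial_{t'}(t'^{-1/2})=-\tfrac12 t'^{-3/2}$, the positive term $\tfrac14\int_0^t t'^{-3/2}\|w\|_{L^2}^2\,dt'$. This is exactly the left-hand side of the claim.

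It then remains to estimate the three forcing contributions on the right, each paired with $\nabla w$ after one integration by parts. For the source term I would use H\"older with exponents $(4,4,2)$, giving $|\langle u_L\otimes u_L,\nabla w\rangle|\le\|u_L\|_{L^4}^2\|\nabla w\|_{L^2}$, and Young's inequality to absorb $\varepsilon\int t'^{-1/2}\|\nabla w\|_{L^2}^2$ into the dissipation; the remainder is $\int_0^\infty t'^{-1/2}\|u_L\|_{L^4}^4\,dt'$, which by the heat-flow characterisation of Besov norms equals $\|u_0\|_{\dot B^{-1/4,4}_4}^4$ and supplies the prefactor.

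For the two cross terms I would use H\"older with exponents $(5,10/3,2)$ together with the Gagliardo--Nirenberg inequality $\|w\|_{L^{10/3}}\lesssim\|w\|_{L^2}^{2/5}\|\nabla w\|_{L^2}^{3/5}$, so that $|\langle u_L\otimes w,\nabla w\rangle|\lesssim\|u_L\|_{L^5}\|w\|_{L^2}^{2/5}\|\nabla w\|_{L^2}^{8/5}$; writing the weighted term as $(t'^{-1/2}\|\nabla w\|_{L^2}^2)^{4/5}\cdot(t'^{-1/2})^{1/5}\|u_L\|_{L^5}\|w\|_{L^2}^{2/5}$ and applying Young with exponents $(5/4,5)$ again absorbs $\varepsilon\int t'^{-1/2}\|\nabla w\|_{L^2}^2$ and leaves $C\int_0^t\|u_L\|_{L^5}^5\,\bigl(t'^{-1/2}\|w\|_{L^2}^2\bigr)\,dt'$.

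Setting $A(t)=t^{-1/2}\|w(t)\|_{L^2}^2$, the above collapses to $A(t)+(\text{good terms})\le C\|u_0\|_{\dot B^{-1/4,4}_4}^4+C\int_0^t\|u_L\|_{L^5}^5 A(t')\,dt'$, and Gronwall gives $A(t)\lesssim\|u_0\|_{\dot B^{-1/4,4}_4}^4\exp\!\bigl(C\int_0^t\|u_L\|_{L^5}^5\,dt'\bigr)$; feeding this bound back then controls the integral terms as well. I would conclude with the heat characterisation $\int_0^\infty\|u_L\|_{L^5}^5\,dt=\|u_0\|_{\dot B^{-2/5,5}_5}^5$ (finite, since $\dot B^{-1/4,4}_4\hookrightarrow\dot B^{-2/5,5}_5$ by the Besov embedding at scale $-1$), which produces the stated exponential. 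The step I expect to be most delicate is the justification of the energy identity for a merely strong solution: one works on $[\delta,t]$ and lets $\delta\to0$, which requires $t^{-1/2}\|w(t)\|_{L^2}^2\to0$ and convergence of $\int_0 t'^{-3/2}\|w\|_{L^2}^2\,dt'$ near $0$. Both follow from the short-time decay provided by the local Cauchy theory in the critical space $\dot B^{-1/4,4}_4$ (with $q=4<+\infty$), which gives the small gain beyond the borderline rate $\|w(t)\|_{L^2}\lesssim t^{1/4}$ needed for these limits.
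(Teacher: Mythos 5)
Your proposal is correct and follows essentially the same route as the paper: an $L^2$ energy estimate on the equation for $w$, H\"older plus the interpolation $\|w\|_{L^{10/3}}\lesssim\|w\|_{L^2}^{2/5}\|\nabla w\|_{L^2}^{3/5}$, absorption of the dissipation by Young, the scaling weight $t^{-1/2}$, Gronwall against $\int\|u_L\|_{L^5}^5\,dt'$, and the heat-flow characterisation of the $\dot B^{-1/4,4}_4$ and $\dot B^{-2/5,5}_5$ norms. The differences are cosmetic (you weight by $t'^{-1/2}$ before integrating and then invoke Gronwall, while the paper forms $\psi(t)e^{-C\phi(t)}$ with $\psi=t^{-1/2}\|w\|_{L^2}^2$ and integrates the resulting differential inequality), and your closing discussion of the $\delta\to 0$ justification addresses a point the paper leaves implicit.
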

The equation on $w$ reads
\begin{equation}
  \label{eq:mns}
  \left \{
\begin{array}{c}
\partial_{t} w -\Delta w +w\cdot \nabla w + u_{L}\cdot \nabla w = -w\cdot \nabla
u_{L}-u_{L}\cdot \nabla u_{L}-\nabla p\\ \dive w = 0\quad\hbox{and}\quad
w_{|t=0} =0.
\end{array} \right.
\end{equation}
Performing an~$L^2$ energy estimate on \eqref{eq:mns} yields
$$
\frac 1 2\frac d {dt} \|w(t)\|_{L^2}^2 +\|\nabla w(t)\|_{L^2} ^2 \leq 
\| u_{L}\|^2_{L^4}\| \nabla w(t)\|_{L^2} +\|u_L\|_{L^5} \|w \|_{L^{\frac{10}3}}\|\nabla w \|_{L^2}
$$
where integration by parts was done on all terms on the right using the divergence free condition, followed by H\"older. As $\|w\|_{10/3}\leq \|w\|_{L^2}^{2/5}\|\nabla w\|^{3/5}_{L^2}$, by convexity
\begin{equation}
\label{scaledEnergydemoeq4} \frac d {dt} \|w(t)\|_{L^2}^2 +\|\nabla
w(t)\|_{L^2} ^2 \leq 2 \|u_{L}(t)\|_{4}^4 + C \|w(t)\|_{L^2} ^2
\|u_{L}(t)\|_{5}^{5}\,.
\end{equation}
Introduce the correct scaling in time, $\psi(t)= {t^{-\frac 1 2}}
\|w(t)\|^2_{L^2}$ and let $\phi(t)=\ds \int_{0}^t
    \|u_{L}(t')\|^5_{L^5}\,dt'$,
\begin{equation*}
 \frac d {dt} \left( \psi(t) e^{-C\phi(t)}\right) + 
 \frac 1 {t^{\frac 1 2}}\left(
  \psi(t)+\|\nabla w(t)\|^2_{2} \right) e^{-C\phi(t)} 
\leq \frac 2 {t^\frac 1 2} \| u_{L}(t)\|^4_{L^4} e^{-C\phi(t)}\,.
    \end{equation*}
We now integrate over $[0,t]$,
$$
\longformule{
\psi(t)+\int_{0}^t \frac 1 {t'^{\frac 1 2}}\left(\psi(t')+ \|\nabla w(t')\|_{L^2} ^2 \right)
  \,dt' \lesssim \int_{0} ^{+\infty}
 t'^{\frac 1 2} \|S(t') u_{0}\|_{4}^4 \frac {dt'} {t'}
 }
 {  {}\times
\exp \Bigl(
C \int_{0}^{+\infty} t'\|S(t')u_{0} \|_{L^5}^5 \frac {dt'} {t'}\Bigr).  
}
$$
Recalling that in our definition of Besov norms \eqref{eq:defBesovHeat} we may replace $Q(t)$ by $S(t)$ for negative regularity, we identify equivalent norms
for the Besov norms ~$\Be {-1/4} 4 4$ and~$\Be {- 2/5} 5 5$ in our last inequality, and get the desired result. \cqfd

\subsection{From scaled energy estimates to regularity improvement}
\label{sec:sub1}
We now (re)prove a particular case of Theorem \ref{th1}, namely $p=q=4$, from the estimates in the previous subsection. Iterating the scaled energy estimate on higher order fluctuations would allow larger $p,q$.
\begin{proposition}
\label{B-1/4to1/23} Let~$u$ be a strong solution of~(NS) on a time
interval~$[0,T[$, with data~$u_{0}$ in~$ \Be {-1/4} 4 4$ and such
that $ u \in L^\infty([0,T[; \Be {-1/4} 4 4)$. Then ~$w$ is in~$L^\infty([0,T[;\Be {1/2} 2 \infty)$.
\end{proposition}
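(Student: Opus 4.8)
The plan is to bound $w$ in the space $\Lb\infty 2$, which under the conventions of this paper is precisely $L^\infty([0,T[;\Be{1/2}2\infty)$; equivalently, using the heat characterisation of Definition \ref{d1}, it suffices to produce a bound
$\sup_{t<T}\sup_{\tau>0}\tau^{1/4}\|\nabla S(\tau)w(\cdot,t)\|_{L^2}\lesssim C(M,u_0)$.
The inputs I allow myself are the two scale-invariant controls furnished by Proposition \ref{scaledEnergy}, namely $\sup_{t<T}t^{-1/2}\|w(t)\|_{L^2}^2\lesssim C(u_0)$ and $\int_0^T t^{-1/2}\bigl(\|\nabla w(t)\|_{L^2}^2+t^{-1}\|w(t)\|_{L^2}^2\bigr)\,dt\lesssim C(u_0)$, together with the a priori hypothesis: after subtracting the linear flow it reads $w=u-u_L\in\Lb\infty 4$, since $\Be{-1/4}44\hookrightarrow B_4$ and $u_L=S(t)u_0$ lies in $\Lb\infty 4$ as well.

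First I would record the consequences of the energy bounds that are genuinely scale invariant and, crucially, carry time integrability. By the Gagliardo--Nirenberg inequality $\|w\|_{L^3}^2\lesssim\|w\|_{L^2}\|\nabla w\|_{L^2}$ and the two bounds above, the function $t\mapsto\|w(t)\|_{L^3}^2$ lies in $L^2(dt/t)$, and $w$ lies in the usual (suitably weighted) energy class $L^2_t\dot H^1$. These are the critical-scale, time-integrable norms that the product estimates can digest. The linear flow is disposed of by heat estimates alone: exactly as in the computation preceding Proposition \ref{th1m}, for $p=4$ one has $B(u_L,u_L)\in\Lb\infty 2\cap\Lb 1 2$, so that term already sits in the target space.

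The heart of the matter is the identity $w=B(u_L,u_L)+2B(u_L,w)+B(w,w)$ from \eqref{eq:w2}. I would estimate the two remaining terms by the product rules and mapping properties of $B$ (Proposition 4.1 of \cite{gip3}, in the form of Lemma \ref{productlemma}), feeding the critical bound $w\in\Lb\infty 4$ into one factor and a time-integrable energy norm into the other, so as to land in $\Lb\infty 2$ while respecting the scaling $-1$. Since the purely $L^\infty$-in-time critical norm is \emph{not} stable under the bilinear map, the gain must come from the $\rho<\infty$ norms produced by the dissipation: writing $B(v,w)(t)=-\int_0^t \mathbb{P}S(t-s)\nabla\cdot(vw)(s)\,ds$, the contribution of $s$ near $t$ is tamed by the near-diagonal smoothing of the heat kernel, while the scaled dissipation controls the bulk. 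I would then close by absorption, subdividing $[0,T[$ into finitely many subintervals on each of which the time-integrable norms of $w$ and of $u_L$ are small (possible since they are integrals of $L^1$ densities): on such a subinterval $[a,b]$ one restarts $B$ from time $a$, propagating $\|w\|_{L^\infty([a,b];B_2)}\lesssim \|w(a)\|_{B_2}+C(u_0)+(\text{small})\,\|w\|_{L^\infty([a,b];B_2)}$, and iterates across the subdivision.

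The main obstacle is exactly the self-interaction $B(w,w)$ (and the high-frequency part of $2B(u_L,w)$): the scaled energy estimate controls $\nabla w$ only after integration in time, never pointwise, and a naive Duhamel bound using only $\|w(t)\|_{L^2}$ loses a power of $\tau$ as $\tau\to0$, reflecting that the critical $L^\infty_t$ Besov norm at scale $-1$ is not preserved by $B$. The delicate point is therefore to combine the a priori critical bound $w\in\Lb\infty 4$ with the time-integrable scaled dissipation so that the bilinear term closes by absorption, which is precisely what the product estimates in the $\mathcal{L}^\rho$-scale of Definition \ref{raah11} are designed to provide. An alternative, perhaps cleaner, route would replace the Duhamel bookkeeping by a direct Littlewood--Paley energy estimate on \eqref{eq:mns}, weighting $\|\dot\Delta_j w(t)\|_{L^2}$ by $2^{j/2}$ and bounding the nonlinear and commutator terms by these same two families of norms; this yields $\sup_{j,t}2^{j/2}\|\dot\Delta_j w(t)\|_{L^2}\lesssim C(M,u_0)$, which is the asserted membership in $\Be{1/2}2\infty$.
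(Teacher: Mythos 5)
Your setup matches the paper's — the target norm, the inputs (Proposition \ref{scaledEnergy} plus the a priori bound $w\in\Lb{\infty}{4}$), the decomposition \eqref{eq:w2}, and the identification of $B(w,w)$ as the obstruction are all correct — but the mechanism you propose to overcome that obstruction does not work, and what is missing is precisely the heart of the paper's proof. Restarting Duhamel at fixed subdivision points $a$ and absorbing fails at high frequencies: at frequency $2^j$, the product rule gives $\|\Delta_j(w\otimes w)(t')\|_{L^2}\lesssim 2^{j/2}\|w(t')\|_{\Be{-1/4}{4}{\infty}}\|w(t')\|_{\Be{1/2}{2}{\infty}}$, and the Duhamel factor $\int_a^t e^{-c2^{2j}(t-t')}2^{2j}\,dt'$ is of order $1$ as soon as $t-a\gtrsim 2^{-2j}$; hence on any fixed subinterval, however short, the absorption coefficient is the full a priori bound $M$, not a small constant. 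The critical $L^\infty_t$ norm carries no time integrability that subdivision could exploit, and the time-integrable dissipation norms cannot rescue this either: the dangerous contribution lives at $t'\in[t-O(2^{-2j}),t]$, absolute continuity of $\int t'^{-1/2}\|\nabla w(t')\|^2_{L^2}\,dt'$ gives smallness over such windows but with no rate in $j$, while Bernstein combined with the global bound $\|w(t')\|^2_{L^2}\lesssim t'^{1/2}$ (which is of order one near $t'=t$, not small) loses a factor $2^j$. So critical $L^\infty_t$ information plus energy estimates anchored at time $0$ cannot close; this is a genuine gap, not a bookkeeping issue.

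What the paper does instead is a frequency-dependent splitting of the Duhamel integral at $t_{j,\Lambda}=t-\Lambda 2^{-2j}$. On $[0,t_{j,\Lambda}]$ the heat kernel contributes a factor $e^{-c\Lambda/2}$, and choosing $\Lambda\sim\log\bigl(e+M\bigr)$ makes $Ce^{-c\Lambda/2}\|w\|_{L^\infty(0,T;\Be{-1/4}{4}{\infty})}\leq \frac12$: this is how absorption genuinely closes despite the $O(M)$ coefficient. On $[t_{j,\Lambda},t]$ the key idea is to re-base the solution at time $t_{j,\Lambda}$: write $u=u_{L,j}+w_j$ with $u_{L,j}(t')=e^{(t'-t_{j,\Lambda})\Delta}u(t_{j,\Lambda})$ and apply Proposition \ref{scaledEnergy} with initial time $t_{j,\Lambda}$ (legitimate since the a priori bound holds at every time). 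This yields $\|w_j(t')\|^2_{L^2}\lesssim (t'-t_{j,\Lambda})^{1/2}\lesssim\Lambda^{1/2}2^{-j}$, a frequency-adapted smallness that the global fluctuation $w$ does \emph{not} possess, and it exactly compensates the Bernstein loss, since $2^{3j}\cdot\Lambda 2^{-2j}\cdot\Lambda^{1/2}2^{-j}=\Lambda^{3/2}$; the terms involving $u_{L,j}$ are then handled like the linear-flow terms $K_j$, $J_j$, using the a priori bound on $u(t_{j,\Lambda})$. Your proposal contains no substitute for this frequency-dependent re-basing, and without it (or an equivalent device) neither your Duhamel bookkeeping nor your alternative Littlewood--Paley energy estimate on \eqref{eq:mns} can close.
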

\begin{proof}
It requires an alternate definition of Besov
spaces, using discrete Littlewood-Paley decompositions rather than
heat operators.
\begin{definition}
\label{d1bis}
Let $\phi$ be a smooth function in the Schwartz class such that $\widehat\phi =
1$ for~$|\xi|\leq 1$ and $\widehat\phi= 0$ for $|\xi|>2$, and define
$
\phi_{j}(x):= 2^{dj}\phi(2^{j}x)$, and frequency localization
operators~$S_{j} := \phi_{j}\ast\cdot$, $\Delta_{j} := S_{j+1} -
S_{j}$. An equivalent definition of $\dot
B^{s,q}_{p}$ is the set of tempered distributions $f$ such that
\begin{itemize}
\item the partial sum $ \sum^{m}_{-m} \Delta_{j} f $ converges to $f$
  as a tempered distribution if~$s < {\frac{d}{p}}$ and after taking
  the quotient with polynomials if not, and
\item the sequence $\epsilon_{j} := 2^{js}\| \Delta_{j}  f \|_{p}$
  is in~$\ell^{q}$; its $\ell^q$-norm defines the Besov norm of $f$.
\end{itemize}
\end{definition}
We proceed with proving Proposition \ref{B-1/4to1/23}. From standard heat kernel bounds for frequency localized functions, \eqref{eq:nsi} yields the inequality
\begin{multline} 
\label{B-1/4to1/23demoeq0} 
2^{\frac j 2} \|\D_{j} w(t)\|_{L^2} \lesssim  \int_{0}^t e ^{-c2^{2j}(t-t')} 2^{\frac 3 2 j } \|\D_{j}(u_L\otimes u_L(t')\|_{L^2} \, dt'\\
{}+ \int_{0}^t e ^{-c2^{2j}(t-t')} 2^{\frac 3 2 j } \|\D_{j}(u_L\otimes w)(t')\|_{L^2} \, dt'+ \int_{0}^t e ^{-c2^{2j}(t-t')} 2^{\frac 3 2 j } \|\D_{j}(w\otimes w)(t')\|_{L^2} \, dt'\,.
\end{multline}
Let us denote by $K_j(t)+J_j(t)+I_j(t)$ the righthand side. The
first term is easy, using standard heat decay:
$t^{1/8}\|u_L(t)\|_4\lesssim \|u_0\|_{\dot B^{-1/4,\infty}_4}$, and

$$
K_j(t)\lesssim  \int_{0}^t e ^{-c2^{2j}(t-t')} {2^{\frac 3 2 j }} t'^{-1/4}  \, dt'=\int_0^{2^{2j}t} e^{-c (2^{2j}t -\tau)} \tau^{-1/4}\,d\tau\lesssim 1.
$$
The second term is similar, using $t^{1/2}\|u_L(t)\|_\infty\lesssim \|u_0\|_{\dot B^{-1,\infty}_\infty}$, and
$\|w(t)\|_{L^2}\lesssim t^\frac 1 4$,
$$
J_j(t)\lesssim  \int_{0}^t e ^{-c2^{2j}(t-t')} {2^{\frac 3 2 j }}
t'^{1/4} t'^{-1/2}  \, dt'=\int_0^{2^{2j}t} e^{-c (2^{2j}t -\tau)} \tau^{-1/4}\,d\tau\lesssim 1.
$$
Let us decompose~$I_{j}(t)$ by introducing~$t_{j,\Lam}
\eqdefa t-\Lam 2^{-2j}$ (where~$\Lam$ will be chosen later on) and
set $I_j(t)=I_{j,1}(t)+ I_{j,2}(t)$ with
\begin{eqnarray*}
I_{j,1}(t) & = & \int_0^{t_{j,\Lam}} e ^{-c2^{2j}(t-t')} 2^{\frac 3 2 j } \|\D_{j}(w\otimes w)(t')\|_{L^2} \, dt'\quad\hbox{and}\\
I_{j,2}(t) & = & \int_{t_{j,\Lam}}^t e ^{-c2^{2j}(t-t')} 2^{\frac 3 2 j } \|\D_{j}(w\otimes w)(t')\|_{L^2} \, dt'\,.
\end{eqnarray*}
We have
$$
I_{j,1}(t)\leq e ^{-\frac c 2 \Lam} \int_{0}^{t_{j,\Lam}} e^{-\frac c
2 2^{2j} (t_{j,\Lam}-t')} 2^{\frac 3 2 j } \|\D_{j}(u\otimes u(t')\|_{L^2} \,dt'.
$$
From product rules in Besov spaces,
$$
\|\D_{j}(w\otimes w)\|_{L^\infty (0,T;L^2)} \lesssim 2^{\frac j 2 }
\|w\|_{L^\infty (0,T;\Be {-1/4} 4 \infty)}
\|w\|_{L^\infty (0,T;\Be {1/2} 2 \infty)}.
$$
Choosing~$\Lam$ such that $C e^{-\frac c2 \Lam} \|w\|_{L^\infty
  (0,T;\Be {-1/4} 4 \infty)} \leq \frac 1 2\, \virgp$ 
we get
\begin{equation}
\label{B-1/4to1/23demoeq1} \sup_{t\in [0,T[}I_{j,1} (t) \leq \frac 1
2 \|w\|_{L^\infty (0,T;\Be {1/2} 2 \infty)}.
\end{equation}

We are left with~$I_{j,2}(t)$. We may replace $w$ by $u$, as this just
adds terms which are similar to the $K_j$ and $J_j$ terms. We then split~$u$ on the interval~$[t_{j,\Lam} ,t]$ in the following way
$$
u = u_{L,j} +w_{j} \quad\hbox{with}\quad u_{L,j}(t) \eqdefa
e^{(t-t_{j,\Lam})\D} u(t_{j,\Lam}).
$$
By the same reasonning that took care of the $K_j$ and $J_j$ terms,
the $u_{L,j}\otimes u_{L,j}$ and $u_{L,j}\otimes w_j$ terms in $I_j$
are uniformly bounded.
We are left with quadratic terms~$w_{j}\otimes w_{j}$. Using Bernstein inequality we have
$$
\|\D_{j}(w_{j}\otimes w_{j})\|_{L^2} \lesssim 2^{\frac 3 2 j}
\|w_{j}(t)\|^2_{L^2}.
$$
By integration on the interval~$[t_{j,\Lam}, t]$ (the length of which
is less than~$\Lam 2^{-2j}$)
$$
2^{\frac {3j} 2 } \int_{t_{j},\Lam} ^t \|\D_{j} (w_{j} (t')\otimes
w_{j}(t'))\|_{L^2} dt' \lesssim 2^{3j} (\Lam 2^{-2j})\|w_{j}\|_{L^\infty
[t_{j,\Lam},t];L^2)}^2.
$$
Proposition\refer{scaledEnergy} with initial time~$t_{j,\Lam}$ implies
$$
2^{\frac {3j} 2 } \int_{t_{j},\Lam} ^t \|\D_{j} (w_{j} (t')\otimes
w_{j}(t'))\|_{L^2} dt' \lesssim \Lam \|u\|^4_{L^\infty(0,T; \Be
  {-1/4}4 4)} \exp \bigl(C\|u\|^5 _{L^\infty(0,T; \Be{-2/5} 5 5)}\bigr).
$$
Then plugging all this in\refeq{B-1/4to1/23demoeq0}
and\refeq{B-1/4to1/23demoeq1}, we get, for any~$t<T$,
$$
\longformule{ \|w\|_{L^\infty([0,t]; \Be{1/2}2 \infty)} \lesssim
\|u_0\|^2_{\Be{-1/4} 4 \infty} + \frac 1 2 \|w\|_{L^\infty([0,t];
\Be{1/2} 2 \infty)} } { {} + C(1+\Lam) \|u\|^4
_{L^\infty(0,T; \Be{-1/4} 4 4)} \exp \bigl(C\|u\|^5
_{L^\infty(0,T; \Be{-2/5} 5 5)}\bigr).  }
$$
The choice of~$\Lam$ means that $\Lam \sim \log \bigl(
e+\|u\|_{L^\infty(0,T; \Be{-1/4} 4 4)}\bigr)$.  This
concludes the proof of Proposition\refer{B-1/4to1/23}. 
\end{proof}

Let us prove Theorem\refer{thholder}. Let us consider two times~$t$ and~$t_0$  in~$[0,T[$. We can assume that~$t_0<t$. Then, let us write that
$$
u(t)-u(t_0) = u(t)-S(t-t_0)u(t_0) + \bigl(S(t-t_0)-\Id\bigr) u(t_0).
$$
Applying Proposition\refer{B-1/4to1/23} at time~$t_0$ and using
$L^2\hookrightarrow \Be{-3/4} 4 4$
$$
\|u(t) -S(t-t_0)u(t_0)\|_{\Be{-3/4} 4 4} \leq C(M) |t-t_0|^{\frac 1 4}.
$$
Moreover, we have
$$
\bigl\|\bigl(S(t-t_0)-\Id\bigr)u(t_0)\|_{\Be {-3/4} 4 4} \leq
C|t-t_0|^{\frac 1 4} \|u(t_0)\|_{\Be{-1/4} 4 4}\,,
$$
and Theorem\refer{thholder} is proved. \cqfd

\end{document}